\definecolor{NU100}{RGB}{78,42,132}
\definecolor{NU90}{RGB}{91,59,140}
\definecolor{NU80}{RGB}{104,76,150}
\definecolor{NU70}{RGB}{118,93,160}
\definecolor{NU60}{RGB}{131,110,170}
\definecolor{NU50}{RGB}{147,128,182}
\definecolor{NU40}{RGB}{164,149,195}
\definecolor{NU30}{RGB}{182,172,209}
\definecolor{NU20}{RGB}{204,196,223}
\definecolor{NU10}{RGB}{228,224,238}
\definecolor{NUblue1}{RGB}{37,90,124}
\definecolor{NUblue2}{RGB}{61,122,161}
\definecolor{NUblue3}{RGB}{106,179,227}
\definecolor{NUyellow1}{RGB}{194,179,49}
\definecolor{NUyellow2}{RGB}{253,236,86}
\definecolor{NUyellow3}{RGB}{255,240,110}
\definecolor{NUred1}{RGB}{171,43,86}
\definecolor{NUred2}{RGB}{223,76,125}
\definecolor{NUred3}{RGB}{246,106,153}
\definecolor{gray80}{gray}{0.80}
\definecolor{gray90}{gray}{0.90}
\tikzset{block/.style={%
		text height=1.6ex,text depth=0.25ex,
		rectangle,
		minimum size=12mm,inner xsep=4mm,inner ysep=2mm,
		very thick,draw
}}
\tikzset{Lap/.style={%
		text height=1.6ex,text depth=0.25ex,
		rectangle,
		minimum size=12mm,inner xsep=4mm,inner ysep=2mm,
		very thick,draw
}}
\tikzset{sum/.style={%
		circle,
		minimum size=1mm,inner xsep=1mm,inner ysep=1mm,
		very thick,draw}}
\tikzset{link/.style={->,very thick,>=stealth,rounded corners}}
\tikzset{vertex/.style={draw,circle,very thick,black,fill=NUblue3!80}}
\tikzset{colvertex/.style={draw,circle,very thick,black,fill=NUred3!80}}
\newcommand*{\tr}{%
	{\mathpalette\@tr{}}%
}
\newcommand*{\@tr}[2]{%
	\raisebox{\depth}{$\m@th#1\intercal$}%
}
\let\save@mathaccent\mathaccent
\DeclareMathOperator*{\argmin}{arg\,min}
\newcommand*{\1}{\ensuremath{\mathbb{1}}}
\newcommand*{\La}{\ensuremath{\mathcal{L}}}
\newcommand*{\R}{\ensuremath{\mathbb{R}}}
\newcommand*{\n}{\mkern-1.5mu}
\theoremstyle{plain}
\newtheorem{theorem}{Theorem}
\newtheorem{lemma}{Lemma}
\theoremstyle{definition}
\newtheorem{remark}{Remark}
\newcommand{\Mod}[1]{\ (\mathrm{mod}\ #1)}
\begin{document}
	\title{Self-Healing First-Order Distributed Optimization with Packet Loss}

	\author{Israel L. Donato Ridgley$^{1,4}$, \IEEEmembership{Member, IEEE}, \\ Randy A. Freeman$^{1,3,4}$, \IEEEmembership{Member, IEEE}, and Kevin M. Lynch$^{2,3,4}$, \IEEEmembership{Fellow, IEEE}
		\thanks{All authors are affiliated with Northwestern University, Evanston, IL 60208 USA (e-mail: israelridgley@gmail.com; freeman@northwestern.edu; kmlynch@northwestern.edu).}
		\thanks{$^1$Department of Electrical \& Computer Engineering; $^2$Department of Mechanical Engineering; $^3$Northwestern Institute on Complex Systems; $^4$Center for Robotics and Biosystems}
		\thanks{This material is based upon work supported by the National Science Foundation under Grant No.\ CMMI-2024774.}
	}
	
	\maketitle

	\begin{abstract}
		We describe SH-SVL, a parameterized family of first-order distributed optimization algorithms that enable a network of agents to collaboratively calculate a decision variable that minimizes the sum of cost functions at each agent. These algorithms are \emph{self-healing} in that their convergence to the correct optimizer can be guaranteed even if they are initialized randomly, agents join or leave the network, or local cost functions change. We also present simulation evidence that our algorithms are self-healing in the case of dropped communication packets. Our algorithms are the first single-Laplacian methods for distributed convex optimization to exhibit all of these characteristics. We achieve self-healing by sacrificing internal stability, a fundamental trade-off for single-Laplacian methods. 
	\end{abstract}
	
	\section{Introduction}
	\label{sec:intro}
	The distributed optimization problem involves a network of $n$ agents that each calculates a decision vector that minimizes a global additive objective function of the form $f(\cdot)=\sum_i f_i(\cdot)$, where $f_i$ denotes the local convex objective function known only to agent $i$. Specifically, each agent maintains a local estimate $x_i$ of the global minimizer
	\begin{equation}
		x_{\text{opt}} = \argmin_{\theta} \sum_i f_i(\theta),
	\end{equation}
	which we assume is unique.
	The agents reach consensus $x_i = x_{\text{opt}}$ by computing the gradients of their local objective functions $\nabla\n f_i(x_i)$ and passing messages along the links of the communication network.
	
	Distributed optimization problems of this form have broad application. For example, a distributed set of servers or sensors could perform a learning task (e.g., classification) using their local data without uploading the data to a central server for bandwidth, resiliency, or privacy reasons~\cite{forcangia10}. Swarms of robots can use distributed optimization to plan motions to solve the rendezvous problem \cite{rig08}. 
	
	The optimization of a collective cost function in a network setting has seen considerable interest over the last decade \cite{sunvanles20,shiqingwuyin15,lishiyan19,jak19,nedolsshi17,quli18,yuayinzhasay19p1,yuayinzhasay19p2}.
	Recently, several authors have adapted methods from control theory to study distributed optimization algorithms as linear systems in feedback with uncertainties constrained by integral quadratic constraints (IQCs) \cite{lesrecpac16,sunhules17,sunvanles20}. These works have made it possible to more easily compare the various known algorithms across general classes of cost functions and graph topologies. Furthermore, the framework has been extended to stochastic optimization algorithms by using techniques from Markov Jump Linear Systems in the centralized setting \cite{huseiran17} as well as the setting of synchronous packet loss in autonomous vehicle fleets \cite{seisen05}.
	
	The work \cite{sunvanles20} uses these techniques to describe several recent distributed gradient-tracking optimization algorithms within a common framework, then describes a new algorithm (SVL) within that framework that achieves a superior worst-case convergence rate. However, all of the algorithms considered in \cite{sunvanles20} share a common undesirable trait:
	to reach the correct solution, their states must start in a particular subspace of the overall global state space and remain in it at every time step.
	If for any reason the state trajectories 
	leave this 
	subspace (e.g., incorrect initialization, dropped packets, 
	computation errors, agents leaving the network, changes to objective functions due to continuous data collection), then the system will no longer converge to the minimizer.
	Such methods cannot automatically recover from disturbances or other faults that displace their trajectories from this subspace; in other words, they are not \textit{self-healing}. The method presented in \cite{mayulan21} is self-healing; however, the decaying gradient step size limits the rate of convergence and causes disturbances to generate increasingly long transients. 
	
	In this paper, we extend our results from dynamic average consensus estimators \cite{ridfrelyn20,kiavancorfrelynmar19} to design SH-SVL, a family of self-healing distributed optimization algorithms whose trajectories 
	need not evolve on a 
	pre-defined subspace.  
	In practice, this means that our algorithms
	can be arbitrarily initialized,
	agents can join or leave the network at will, and agents can change their objective functions as necessary, such as when they collect new data. An important consequence of the self-healing property is that our algorithm can be modified with a low-overhead packet-loss protocol which allows the algorithm to recover from lost or corrupted packets.
	
	We refer to distributed optimization algorithms that communicate one or two variables (having the same vector dimension as the decision variable $x_i$) per time step as single- and double-Laplacian methods, respectively. Examples of single-Laplacian methods are SVL and NIDS, while examples of double-Laplacian methods are uEXTRA and DIGing \cite{sunvanles20,lishiyan19,jak19,nedolsshi17,quli18}. SH-SVL is the first self-healing single-Laplacian methods for convex optimization that converge to the exact (rather than an approximate) solution with a fixed step size (see \cite{ridfrelyn20, hadvaidom16} for the specific case of average consensus). They achieve self-healing by sacrificing internal stability, a fundamental trade-off for single-Laplacian gradient tracking-methods. 
	In particular, each agent will have an internal state that grows linearly in time in steady state, but because such growth is not exponential it
	will 
	not cause any numerical issues 
	unless the algorithm runs over a long time horizon. Double-Laplacian methods can achieve both internal stability and self-healing, but they require twice as much communication per time step 
	and converge no faster than single-Laplacian methods \cite{sunvanles20,kiavancorfrelynmar19}.
 
	This paper uses the same algorithms as~\cite{ridfrelyn21}, but provides theoretical and numerical proof for convergence of SH-SVL (Algorithm~2) with packet loss. The performance of SH-SVL is explored in the setting of synchronous packet loss on arbitrary graphs and edgewise packet loss in small directed cycles. We perform a numerical study to investigate the performance when algorithm parameters are optimized for the specific level of packet loss versus no packet loss. In addition, we show that SH-SVL can recover the same performance as SVL when parameters are optimized.
	
	\section{Preliminaries and Main Results}
	
	\subsection{Notation and terminology}
	
	Let $\1_n$ be the $n$-dimensional column vector of all ones, $I_n$ be the identity matrix in $\R^{n \times n}$, and $\Pi_n = \frac{1}{n}\1 \1^\tr$ be the projection matrix onto the vector $\1_n$. We drop the subscript $n$ when the size is clear from context. We refer to the one-dimensional linear subspace of $\R^n$ spanned by the vector $\1_n$ as the \emph{consensus direction} or the \emph{consensus subspace}. We refer to the $(n-1)$-dimensional subspace of $\R^n$ associated with the projection matrix $(I_n - \Pi_n)$ as the \emph{disagreement direction} or subspace. The $i$-th column of the identity matrix is denoted $e_i$, with a size determined by context. The $i$-th row of the matrix $A$ is written $A_{r_i} = e_i^\tr A$. Given $A\in \R^{n\times n}$, we let $D(A)\in  \R^{n\times n}$ denote the diagonal matrix whose diagonal elements are the same as A.
	
	Subscripts denote the agent index whereas superscripts denote the time index. The symbol $\otimes$ represents the Kronecker product. $A^+$ indicates the Moore-Penrose inverse of $A$. Symmetric quadratic forms $x^\tr\n A x$ are written as $[\star]^\tr\n A x$ 
	to save space when $x$ is long. The local decision variables are $d$-dimensional and represented as a row vector, i.e., $x_i \in \R^{1 \times d}$, and the local gradients are a map $\nabla\n f_i : \R^{1 \times d} \rightarrow \R^{1 \times d}$. The symbol $||\cdot||$ refers to the Euclidean norm of vectors and the spectral norm of matrices. The condition number of the symmetric matrix $A$ is defined by $\operatorname{cond}(A) = \lambda_{\textnormal{max}}(A) / \lambda_{\textnormal{min}}(A)$.  The expected value operator is $\mathbb{E}[\cdot]$.
	
	We model a network of $n$ agents participating in a distributed computation as a 
	weighted digraph $\mathcal{G} = (\mathcal{V},\mathcal{E})$, where $\mathcal{V} = \{1,...,n\}$ is the set of $n$ nodes (or vertices) and $\mathcal{E}$ is the set of edges such that if $(i,j) \in \mathcal{E}$ then node $i$ can receive information from $j$. We define an ordering on $\mathcal{E}$ such that the set is in ascending order according to the receiving node index followed by the sending node index.  We make use of the \textit{weighted graph} \textit{Laplacian} $\La \in \R^{n \times n}$ associated with $\mathcal{G}$ such that $-\La_{ij}$ is the weight on edge $(i,j)\in \mathcal{E}$, $\La_{ij}=0$ when $(i,j)\not\in\mathcal{E}$ and $i\neq j$, and the diagonal elements of $\La$ are 
	$\La_{ii} = -\sum_{j\neq i} \La_{ij}$,
	so that $\La \1 = 0$. 
	We define $\sigma = ||I-\Pi-\La||$, which is a parameter related to the edge weights and the graph connectivity.
	
	
	Throughout this work we stack variables and objective functions such that
	\[x^k = \begin{bmatrix}
		x_1^k \\
		\vdots \\
		x_n^k
	\end{bmatrix} \in \R^{n\times d} \;\; \text{and} \;\;
	\nabla\n F(x^k) = \begin{bmatrix}
		\nabla\n f_1(x_1^k) \\
		\vdots \\
		\nabla\n f_n(x_n^k)
	\end{bmatrix} \in \R^{n\times d}.\]
	
	If a fixed point, $x^\star$, exists for the signal $x^k$, then we denote the error signal $\tilde{x}^k = x^k - x^\star$. 
	
	\subsection{Assumptions}
	
	\begin{enumerate}[itemsep=0.25em,label=\textbf{(A\arabic*)},%
		align=left,leftmargin=*,series=assumptions]
		
		\item Given $0< \mu \leq L$, we assume that the local gradients are sector bounded on the interval $(\mu,L)$, meaning that they satisfy the quadratic inequality 
		\begin{equation*} \hspace*{-0.2in}
			[\star]^\tr
			\begin{bmatrix}
				-2\mu LI_d & (L+\mu)I_d\\
				(L+\mu)I_d & -2I_d
			\end{bmatrix}
			\begin{bmatrix}
				(x_i-x_{\text{opt}})^\tr\\
				(\nabla\n f_i(x_i)-\nabla\n f_i(x_{\text{opt}}))^\tr
			\end{bmatrix} \geq 0
		\end{equation*}
		for all $x_i \in \R^{1\times d}$, where $x_{\text{opt}}$ satisfies $\sum_{i=1}^n \nabla\n f_i(x_{\text{opt}}) = 0$. 
		We define the condition ratio as $\kappa = L/\mu$, which captures the variation in the curvature of the objective function. \label{a:1}
		
		\item The graph $\mathcal{G}$ is strongly connected. \label{a:2}
		\item The graph $\mathcal{G}$ is weight balanced, meaning that $\1^{\n\tr}\n \La = 0$. \label{a:3}
		\item The weights of $\mathcal{G}$ are such that $\sigma = ||I-\Pi-\La|| < 1$. \label{a:4}
	\end{enumerate}
	
	\begin{remark}
		Assumption \ref{a:1} is known as a \textit{sector IQC} (for a more detailed description see \cite{lesrecpac16}). \ref{a:1} is satisfied when the local objective functions are $\mu$-strongly convex with $L$-Lipschitz continuous gradients, though the assumption itself is weaker than those conditions.
	\end{remark}
	
	\begin{remark}
		Throughout this paper we assume without loss of generality that the dimension of the local decision and state variables is $d=1$.
	\end{remark}
	
	\begin{remark}
		Under appropriate conditions on the communications network, the agents can self-balance their weights in a distributed way to satisfy \ref{a:3}; 
		for example, they can use a scalar consensus filter like push-sum (see Algorithm~12 in \cite{haddomcha18}). Additionally, agents can normalize their edge weights to enforce that their weighted in-degrees (and thus their weighted out-degrees) sum to less than one in order to satisfy \ref{a:4}.
	\end{remark}
	
	\begin{lemma}
		Given assumptions \ref{a:2} -- \ref{a:4}, the output of the Laplacian is sector bounded on $(1-\sigma,1+\sigma)$ and equivalently the signals $y$ and $\La y$ satisfy the following quadratic constraint
		
		\begin{equation} \hspace*{-0.2in} \label{eq:graph_iqc}
			[\star]^\tr
			\begin{bmatrix}
				(\sigma^2-1) (I_n-\Pi_n) & (I_n-\Pi_n)\\
				(I_n-\Pi_n) & -(I_n-\Pi_n)
			\end{bmatrix}
			\begin{bmatrix}
				y\\
				\La y
			\end{bmatrix} \geq 0
		\end{equation}
		for all $y \in \R^n$. 
	\end{lemma}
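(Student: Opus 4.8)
The plan is to expand the displayed quadratic form, collapse it onto the disagreement subspace using weight balance, and then recognize what remains as a direct restatement of the operator-norm bound $\|I-\Pi-\La\| = \sigma$ from \ref{a:4}.

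First I would put $z = \La y$; expanding the symmetric quadratic form (and using symmetry of $I-\Pi$) gives
\[ (\sigma^2-1)\,y^\tr(I-\Pi)y \;+\; 2\,y^\tr(I-\Pi)z \;-\; z^\tr(I-\Pi)z . \]
Because $\La\1 = 0$ by construction and $\1^{\n\tr}\La = 0$ by \ref{a:3}, the Laplacian satisfies $\Pi\La = \La\Pi = 0$; hence $z = \La y = \La(I-\Pi)y$ already lies in the disagreement subspace, so $(I-\Pi)z = z$. Writing $w := (I-\Pi)y$ for the disagreement component of $y$, we get $y^\tr(I-\Pi)y = \|w\|^2$, $\;y^\tr(I-\Pi)z = y^\tr z = w^\tr z = w^\tr\La w$, and $z^\tr(I-\Pi)z = \|\La w\|^2$, so the quadratic form reduces to $(\sigma^2-1)\|w\|^2 + 2\,w^\tr\La w - \|\La w\|^2$, with no dependence on the consensus component of $y$.

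It then remains to show this scalar is nonnegative for every $w$ in the disagreement subspace. The operator $M := I-\Pi-\La$ annihilates $\1$ (since $\Pi\1 = \1$ and $\La\1 = 0$) and, by the identities above, satisfies $\Pi M = 0$ and $M\Pi = 0$, so it sends $\R^n$ into the disagreement subspace and $\sigma = \|M\|$ equals $\sup_{0\neq w\,\perp\,\1}\|Mw\|/\|w\|$; moreover $Mw = w-\La w$ for such $w$. Therefore $\|w-\La w\|^2 \le \sigma^2\|w\|^2$ for all $w\perp\1$, and expanding the left side as $\|w\|^2 - 2\,w^\tr\La w + \|\La w\|^2$ (using that each of the two cross terms is a scalar, so $(\La w)^\tr w = w^\tr\La w$) and rearranging yields precisely $(\sigma^2-1)\|w\|^2 + 2\,w^\tr\La w - \|\La w\|^2 \ge 0$. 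Since the original quadratic form equals this expression for every $y\in\R^n$, the constraint \eqref{eq:graph_iqc} follows; the equivalence with $\La$ being sector bounded on $(1-\sigma,1+\sigma)$ is just the IQC reading of a sector $(\mu,L)$ with $\mu = 1-\sigma$ and $L = 1+\sigma$ acting on the disagreement subspace.

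The one point that needs care is the projection bookkeeping in the middle step: the cross terms between the consensus and disagreement parts of $y$ must drop out, and that relies entirely on weight balance \ref{a:3} --- without $\1^{\n\tr}\La = 0$, the image $\La y$ would retain a consensus component and the reduction would fail. Strong connectivity \ref{a:2} is not needed for the inequality itself, but it guarantees that $\La$ has a simple zero eigenvalue, so the sector $(1-\sigma,1+\sigma)$ claimed by the lemma is non-degenerate on the $(n-1)$-dimensional disagreement subspace. I do not expect any genuinely hard step; the whole content is rewriting the quadratic form in the $w$-coordinates and invoking \ref{a:4}.
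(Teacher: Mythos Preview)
Your argument is correct and is essentially the same as the paper's: both expand the spectral-norm inequality $\|(I-\Pi-\La)y\|^2 \le \sigma^2\|(I-\Pi)y\|^2$ and rearrange to obtain~\eqref{eq:graph_iqc}. Your version is slightly more explicit about the projection bookkeeping (deriving $\Pi\La=\La\Pi=0$ from \ref{a:3} before restricting to $w\perp\1$), whereas the paper compresses that step into the identity $\max_y\|(I-\Pi-\La)y\|/\|y\| = \max_y\|(I-\Pi-\La)y\|/\|(I-\Pi)y\|$, but the underlying computation is identical.
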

	
	\begin{proof}
		From assumptions \ref{a:2} - \ref{a:4} and the definition of the spectral norm, we have that
		
		\begin{align*}
			\sigma &= \max_y \frac{||(I-\Pi-\La)y||}{||y||}\\
			&= \max_y \frac{||(I-\Pi-\La)y||}{||(I-\Pi)y||}\\
			\sigma^2 &= \max_y \frac{y^\tr (I-\Pi-\La^\tr)(I-\Pi-\La)y}{y^\tr (I-\Pi)y}\\
			&= \max_y \frac{y^\tr (I-\Pi-\La^\tr-\La + \La^\tr \La)y}{y^\tr (I-\Pi)y} \\
			&\geq \frac{y^\tr (I-\Pi-\La^\tr-\La + \La^\tr \La)y}{y^\tr(I-\Pi) y}.
		\end{align*}
		From here, rearranging terms yields \eqref{eq:graph_iqc}.
	\end{proof}
	
	\subsection{Results}
	In the following sections we present a parameterized family of distributed, synchronous, discrete-time algorithms to be be run on each agent such that, under assumptions \ref{a:1}-\ref{a:4}, we achieve the following:
	
	\begin{description}[itemsep=0.25em]
		\item[Accurate convergence:] in the absence of disturbances or other faults, the local estimates $x_i$ converge to the optimizer $x_{\text{opt}}$ with an R-linear rate.
		\item[Self-healing:] the system state trajectories need not evolve on 
		a pre-defined
		subspace and will recover from events such as arbitrary initialization, temporary node failure, computation errors,
		or changes in local objectives.
		\item[Packet-loss protocol:] if agents are permitted to store a $d$-dimensional memory state for each of their neighbors, they can implement a packet-loss protocol that allows computations to continue in the event communication is temporarily lost. This extends the self-healing of the network to packet loss in a way that is not possible if the system state trajectories are required to evolve on a pre-defined subspace.
	\end{description}
	
	First we reproduce Algorithm 1 from our prior work \cite{ridfrelyn21} along with its relevant results for stability and convergence to aid in understanding the convergence analysis of Algorithm 2 (SH-SVL) with packet loss. We analyze Algorithm 2 (SH-SVL) with packet loss under two scenarios: synchronous packet loss for general graph topologies and edgewise packet loss where the graph must be known. We show numerical convergence rates for the later case only for small directed cycles. Numerical convergence rates are shown for both algorithms and the case of sub-optimal parameter choice during packet loss is considered.

	\section{Synthesis of Self-Healing\\ Distributed Optimization Algorithms}
	\subsection{Canonical first-order methods}
	As a motivation for our algorithms, we use the canonical form first described in \cite{sunvanles19} and later used as the SVL template \cite{sunvanles20}. When the communication graph is constant, many single-Laplacian methods 
	such as SVL, EXTRA and Exact Diffusion can be described in this form \cite{shiqingwuyin15,yuayinzhasay19p1,yuayinzhasay19p2,sunvanles19,sunvanles20}. Algorithms representable by the SVL template can also be expressed as a state space system $G$ in feedback with an uncertain and nonlinear block containing the objective function gradients $\nabla\n F(\cdot)$ and the Laplacian $\mathcal{L}$ shown in Figure \ref{fig:feedback}, where 
	\begin{gather} \label{eq:svl_template}
		G = \left[ \begin{array}{c|c|c}
			A & B_u & B_v \\
			\hline
			C_x & D_{xu} & D_{xv} \\
			\hline
			C_y & D_{yu} & D_{yv}\end{array}\right] = \left[ \begin{array}{cc|c|c}
			1 & \beta & -\alpha & -\gamma \\
			0 & 1 & 0 & -1 \\ 
			\hline
			1 & 0 & 0 & -\delta \\
			\hline
			1 & 0 & 0 & 0\end{array}\right] \otimes I_n.
	\end{gather}

    \begin{figure}
		\centering
		\begin{tikzpicture}
			\node (G) [block] {$G$};
			\node (NL) [block, below= 0.2cm of G] {${\begin{bmatrix} \nabla\n F(\cdot) & 0 \\ 0 & \mathcal{L} \end{bmatrix}}$};
			\node (off1) [left=of G] {};
			\node (inlabel) [left=0cm of off1] {${\begin{bmatrix}u^k\\v^k\end{bmatrix}}$};
			\node (off2) [right=of G] {};
			\node (outlabel) [right=0cm of off2] {${\begin{bmatrix}x^k\\y^k\end{bmatrix}}$};
   
			\draw [link] (G) -- (G-|off2) -- (off2|-NL) -- (NL);
			\draw [link] (NL) -- (NL-|off1) -- (off1|-G) -- (G);
		\end{tikzpicture}
		\caption{Distributed optimization algorithms
			represented as a feedback interconnection of an LTI system $G$ and an uncertain block containing the gradients and
			the graph Laplacian.}
		\label{fig:feedback}
	\end{figure}
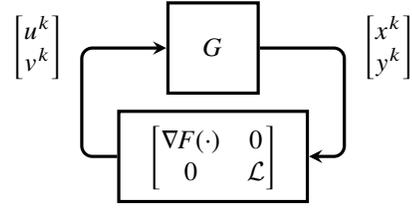%
 
	The LTI system $G$ has two states $w_1^k$ and $w_2^k$, inputs $u^k$ and $v^k$, and outputs $x^k$ and $y^k$ such that 
	\begin{gather}
		u_i^k = \nabla f_i(x_i^k), \; \; \; v_i^k = \sum_{j=1}^n \mathcal{L}_{ij}^k y_j^k.
	\end{gather}
    Additionally, the design parameters of SVL are $\alpha, \beta, \gamma,$ and $\delta$; $\alpha$ controls the influence of the gradient step, $\gamma$ and $\delta$ control the influence of the consensus step, and $\beta$ controls the coupling between the two internal states. 
 
    We would like to alert the reader to a small notational difference between our work and \cite{sunvanles20}: in this work, the variable $x$ is the input to the gradients and the variable $y$ is the input to the Laplacian (as shown in Figure \ref{fig:feedback}), whereas in \cite{sunvanles20} $y$ is the input to the gradients and $z$ is the input to the Laplacian. (We cannot use $z$ because we already use it as the frequency variable of the $z$-transform.)
 
	Algorithms described by Figure~\ref{fig:feedback} and \eqref{eq:svl_template} contain two discrete-time integrators in the LTI block $G$: one integrator is necessary to force the steady state error to zero, and the other is responsible for the agents coming to consensus. In the SVL template, the output of the graph Laplacian feeds into the integrator responsible for consensus.
	
	Algorithms representable by the SVL template, and more broadly all existing first-order methods with a single Laplacian, require that the system trajectories evolve on 
	a pre-defined
	subspace. From our work with average consensus estimators \cite{ridfrelyn20,kiavancorfrelynmar19}, we know that this drawback arises from the positional order of the Laplacian and integrator blocks. When the Laplacian feeds into the integrator, the output of the Laplacian cannot drive the integrator state away from the consensus subspace, which leads to an observable but uncontrollable mode. If the integrator state is initialized on the consensus subspace, or it is otherwise disturbed there, the estimate of the optimizer will contain an uncorrectable error.
	Switching the order of the Laplacian and integrator renders the integrator state controllable but causes it to become inherently unstable because the integrator output in the consensus direction is disconnected from the rest of the system. We exploit this trade-off to develop self-healing distributed optimization algorithms with only a single Laplacian.

	\subsection{Factorization and integrator location}
	To switch the order of the Laplacian and the integrator, we first factor an integrator out of the $G(z)$ block of Figure \ref{fig:feedback},
	\begin{gather}
		\label{eq:Gz}
		G(z) = \begin{bmatrix}
			\dfrac{-\alpha}{z-1} & -\dfrac{\delta z^2 + (\gamma - 2\delta)z + (\beta+\delta-\gamma)}{(z-1)^2}\\[8pt]
			\dfrac{-\alpha}{z-1} & -\dfrac{\gamma z + (\beta - \gamma)}{(z-1)^2}
		\end{bmatrix}\otimes I_n\\
		\label{eq:factor}
		= \begin{bmatrix}
			\dfrac{-\alpha}{z-1} & -\dfrac{z-1+\zeta}{z-1} \\[8pt]
			\dfrac{-\alpha}{z-1} & \dfrac{-\gamma z - (\beta - \gamma)}{(z-1)(\delta z + \eta - \delta)}
		\end{bmatrix}
		\begin{bmatrix}
			1 & 0 \\[8pt]
			0 & \dfrac{\delta z + \eta - \delta}{z-1}
		\end{bmatrix}\otimes I_n,
	\end{gather}
	where
	\begin{gather}
		\eta = \gamma - \delta \zeta \;\;\text{and}\;\;\zeta = \begin{cases}
			\dfrac{\beta}{\gamma} & \text{if } \delta = 0\\
			\dfrac{\gamma - \sqrt{\gamma^2 - 4\beta \delta}}{2\delta} & \text{otherwise.}
		\end{cases}
	\end{gather}
	Swapping the order of the component matrices yields our new family of algorithms (where $G_{\n e}$ replaces $G$):
	\begin{align}
		G_{\n e}(z) &=
		\begin{bmatrix}
			1 & 0 \\[8pt]
			0 & \dfrac{\delta z + \eta - \delta}{z-1}
		\end{bmatrix}
		\begin{bmatrix}
			\dfrac{-\alpha}{z-1} & -\dfrac{z-1+\zeta}{z-1} \\[8pt]
			\dfrac{-\alpha}{z-1} & \dfrac{-\gamma z - (\beta - \gamma)}{(z-1)(\delta z + \eta - \delta)}
		\end{bmatrix}\otimes I_n\notag\\
		&=
		\begin{bmatrix}
			-\alpha \dfrac{1}{z-1} & -\dfrac{z-1+\zeta}{z-1} \\[8pt]
			-\alpha \dfrac{\delta z + \eta - \delta}{(z-1)^2} & -\dfrac{\gamma z + \beta - \gamma}{(z-1)^2}
		\end{bmatrix}\otimes I_n.
	\end{align}
	\begin{figure}
		\begin{tikzpicture}
			\node (L) [block] {$\La$};
			\node (int) [block, left=of L] {${\dfrac{\delta z + \eta - \delta}{z-1}}I_n$};
			\node (off1) [left=of int] {};
			\node (off2) [right=of L] {};
			
			\draw [link] (off1) -- (int);
			\draw [link] (int) -- (L);
			\draw [link] (L) -- (off2);
		\end{tikzpicture}
		\caption{The output of the integrator now feeds into the Laplacian, converting an uncontrollable and observable mode in the original SVL template to a controllable and unobservable one.}
		\label{fig:int_order}
	\end{figure}
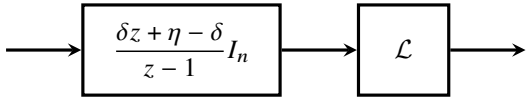%
	Now the output of the integrator feeds directly into the Laplacian, as depicted in Figure \ref{fig:int_order}. We assume that our parameter choices satisfy
	\begin{align}
		\gamma^2 &\geq 4\beta\delta
	\end{align}
	so that the zeros of $G_{\n e}$ remain real and thus the system can be implemented with real-valued signals. The corresponding distributed algorithm is described in Algorithm~\ref{alg:1}, where $w_1$ and $w_2$ are the internal states of $G_{\n e}$, and the compact state space form is 
	\begin{equation}
		G_{\n e} = \left[ \begin{array}{cc|c|c}
			1 & 0 & -\alpha & -\zeta \\
			1 & 1 & 0 & -1 \\ 
			\hline
			1 & 0 & 0 & -1 \\
			\hline
			\delta & \eta & 0 & 0\end{array}\right] \otimes I_n.
	\end{equation}
	The design parameters of Algorithm 1 (and later SH-SVL) are $\alpha, \delta, \zeta,$ and $\eta$; $\alpha$ controls the influence of the gradient step, $\zeta$ controls the influence of the consensus step, and $\delta$ and $\eta$ control the mixture of internal states being transmitted to neighbors. 
	\begin{remark}
		The factorization in \eqref{eq:factor} is not unique; we chose it because it
		leads to a method still having only two internal states per agent.
		There may be other useful factorizations.
	\end{remark}

	\begin{algorithm}[t]
		\SetAlgoLined
		\SetCommentSty{\small}
		\LinesNotNumbered
		\KwSty{Initialization:} Each agent $i\in\{1,...,n\}$ chooses $w_{1i}^0,w_{2i}^0 \in \R^{1 \times d}$ arbitrarily. $\La \in \R^{n \times n}$ is the graph Laplacian.\\
		\For{$k=0,1,2,...$}{
			\For{$i\in\{1,...,n\}$}{
				\KwSty{Local communication}\\
				$y_i^k = \delta w_{1i}^k + \eta w_{2i}^k$\\[2pt]
				$v_i^k = \sum_{j=1}^n\mathcal{L}_{ij} y_j^k$\\
				\KwSty{Local gradient computation}\\
				$x_i^k = w_{1i}^k - v_i^k$ \texttt{\scriptsize // Update the optimizer estimate.} \\[2pt]
				$u_i^k = \nabla\n f_i(x_i^k)$\\
				\KwSty{Local state update}\\
				$w_{1i}^{k+1} = w_{1i}^k - \alpha u_i^k - \zeta v_i^k$\\[2pt]
				$w_{2i}^{k+1} = w_{1i}^k + w_{2i}^k - v_i^k$
		}}
		\KwRet{$x_i$}
		\caption{\small Self-Healing Distributed Gradient Descent}
		\label{alg:1}
	\end{algorithm}
	
	\begin{remark}
		In contrast to algorithms like SVL, Algorithm 1 does not require specific initial conditions and system trajectories are not restricted to a pre-defined subspace. If agents change their local objective functions or drop out of the computation, the system does not need to be reset and the system will converge to the new minimizer. In the case of agents dropping out, the connection topology must still be strongly connected, otherwise Algorithm 1 with a weight balancer will converge to the minimizer for only a subset of the objective functions and consensus across the network will not be achieved. 
	\end{remark}
	
	\section{Stability and Convergence Rates Using IQCs}
	\subsection{Projection onto the disagreement subspace}
	As written, our family of algorithms is internally unstable. We use the projection matrix $(I-\Pi)$ to eliminate the instability from the global system without affecting $x^k$. This procedure is a centralized calculation that cannot be implemented in a distributed fashion, but it allows us to analyze the convergence properties of the distributed algorithm. 
	
	Consider the steady-state values $(w_1^\star,x^\star,u^\star,v^\star)$ and suppose $w_2^k$ contains a component in the $\1$ direction. Then that component does not affect the aforementioned values because it is an input to the Laplacian $\La$ (and lies in its nullspace); however, it grows linearly in time due to the $w_2$ update. Thus the system has an internal instability that is unobservable from the output of the bottom block in Figure~\ref{fig:feedback}. Since the component of $w_2^k$ in the consensus direction is unobservable to the variables $(w_1^k,x^k,u^k,v^k)$, we can throw it away without affecting their trajectories. Using the transformation $\hat{w}_2^k = (I - \Pi)w_2^k$, our state updates become
	\begin{align}
		\label{eq:up1}
		w_1^{k+1} &= w_1^k - \alpha u^k - \zeta v^k\\
		\label{eq:up2}
		\hat{w}_2^{k+1} &= (I-\Pi)w_1^k + (I-\Pi)\hat{w}_2^k - (I-\Pi)v^k \\
		\label{eq:up3}
		x^k &= w_1^k - v^k \\
		\label{eq:up4}
		\hat{y}^k &= \delta w_1^k + \eta \hat{w}_2^k \\
		u^k &= \nabla\n F(x^k) \\
		\label{eq:up6}
		v^k &= \mathcal{L} \hat{y}^k,
	\end{align}
	where $y^k$ was replaced with $\hat{y}^k$ in \eqref{eq:up4} and \eqref{eq:up6} to accommodate $\hat{w}_2^k$. These updates 
	lead to the state-space system
	\begin{equation}\label{eq:gp}
		G_m = \left[ \begin{array}{cc|c|c}
			I & 0 & -\alpha I & -\zeta I \\
			I-\Pi & I-\Pi & 0 & -(I-\Pi) \\ 
			\hline
			I & 0 & 0 & -I \\
			\hline
			\delta I & \eta I & 0 & 0\end{array}\right].
	\end{equation}
	
	\subsection{Existence and optimality of a fixed point}
	Now that we have eliminated the inherent instability of the global system, we can state the following about the fixed points:
	
	\begin{theorem}
		For the system described by $G_m$, there exists a unique fixed point $(w_1^\star,\hat{w}_2^\star, x^\star,\hat{y}^\star,u^\star,v^\star)$, and the fixed point has $x^\star$ in the consensus subspace such that $x_i^\star = x_{\textnormal{opt}}$ for all $i \in \{1,\dots, n\}$, i.e., the fixed point of the system is optimal.
	\end{theorem}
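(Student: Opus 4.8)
The plan is to reduce the fixed-point problem to linear algebra: the only nonlinear element is $\nabla\n F$, and once the fixed-point equations force $x^\star$ onto the consensus subspace the gradient term is pinned down and everything else is determined by a linear system. Setting $w_1^{k+1}=w_1^k$ and $\hat w_2^{k+1}=\hat w_2^k$ in \eqref{eq:up1}--\eqref{eq:up6}, a fixed point must satisfy
\begin{equation}
  \alpha u^\star + \zeta v^\star = 0,\qquad
  \hat w_2^\star = (I-\Pi)\bigl(w_1^\star + \hat w_2^\star - v^\star\bigr),\qquad
  x^\star = w_1^\star - v^\star,
\end{equation}
together with $\hat y^\star = \delta w_1^\star + \eta\hat w_2^\star$, $u^\star = \nabla\n F(x^\star)$, and $v^\star = \La\hat y^\star$. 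The right-hand side of the $\hat w_2$-relation lies in the disagreement subspace, so $\hat w_2^\star = (I-\Pi)\hat w_2^\star$; feeding this back in collapses that relation to $(I-\Pi)(w_1^\star - v^\star)=0$, i.e. $(I-\Pi)x^\star = 0$. Hence $x^\star$ is a consensus vector, $x_i^\star = \bar x$ for all $i$.

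Next I would establish optimality. Assumptions \ref{a:2}--\ref{a:3} give $\La\1 = 0$ and $\1^\tr\La = 0$ with $\operatorname{rank}\La = n-1$, so $\operatorname{range}(\La)$ is exactly the disagreement subspace; in particular $\Pi v^\star = \Pi\La\hat y^\star = 0$. Projecting $\alpha u^\star + \zeta v^\star = 0$ onto the consensus direction and using $\alpha\neq 0$ yields $\Pi u^\star = 0$, i.e. $\1^\tr u^\star = \sum_i \nabla\n f_i(\bar x) = 0$. By \ref{a:1} the point $x_{\text{opt}}$ is the unique $\theta$ solving $\sum_i\nabla\n f_i(\theta)=0$, so $\bar x = x_{\text{opt}}$ and therefore $x_i^\star = x_{\text{opt}}$ for every $i$, which is the optimality claim.

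It remains to check that the whole tuple is uniquely determined and that it exists. With $x^\star = \1\n x_{\text{opt}}$ fixed, $u^\star = \nabla\n F(x^\star)$ is determined (and lies in the disagreement subspace, since $\1^\tr u^\star = 0$); then $v^\star = -(\alpha/\zeta)u^\star$ and $w_1^\star = x^\star + v^\star$ are forced. For $\hat y^\star$ I split into consensus and disagreement parts: $\La$ restricted to the disagreement subspace is a bijection onto itself, so $\La\hat y^\star = v^\star$ fixes $(I-\Pi)\hat y^\star$, while the requirement that $\hat w_2^\star = \eta^{-1}(\hat y^\star - \delta w_1^\star)$ carry no consensus component forces $\Pi\hat y^\star = \delta\,\Pi w_1^\star$; thus $\hat y^\star$, and then $\hat w_2^\star$, are uniquely determined. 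Conversely, defining the tuple this way and verifying \eqref{eq:up1}--\eqref{eq:up6} directly (using $(I-\Pi)x^\star=0$ for the $\hat w_2$-equation and $\1^\tr u^\star = 0$, $v^\star\in\operatorname{range}\La$ for the rest) shows such a fixed point exists. Together these give existence, uniqueness, and optimality.

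The step I expect to require the most care is the bookkeeping around $\hat y^\star$ and $\hat w_2^\star$: one must justify that $\operatorname{range}(\La)$ coincides with the disagreement subspace (this is exactly where strong connectivity and weight balance enter), that $\La$ is invertible on that subspace, and that the consensus/disagreement decomposition of $\hat y^\star$ is consistent with the $\hat w_2$-update. I would also flag the implicit nondegeneracy of the parameterization ($\alpha\neq0$, $\zeta\neq0$, $\eta\neq0$) used above; if $\delta=0$ is to be allowed, the argument is unchanged, with $\hat y^\star$ then automatically in the disagreement subspace.
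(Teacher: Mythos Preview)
Your proposal is correct and follows essentially the same route as the paper: derive $(I-\Pi)x^\star=0$ from the $\hat w_2$-fixed-point equation, obtain $\sum_i\nabla f_i(x_i^\star)=0$ by projecting $\alpha u^\star+\zeta v^\star=0$ onto $\1$ via $\1^\tr\La=0$, and then pin down the remaining variables by linear algebra on the disagreement subspace. The only cosmetic difference is that the paper packages the last step as the closed-form expression $\hat w_2^\star=\tfrac{\alpha}{\zeta\eta}\La^+(\delta\La-I)u^\star$ rather than your bijection-on-the-disagreement-subspace argument, but these are equivalent.
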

	
	\begin{proof}
		First, assume that the fixed point $(w_1^\star,\hat{w}_2^\star, x^\star,\hat{y}^\star,u^\star,v^\star)$ exists. To prove that the variable $x^\star$ lies in the consensus direction, we show that $(I-\Pi)x^\star=0$. From \eqref{eq:up2} and \eqref{eq:up3} we have that
		\begin{align}
			(I-\Pi)w_1^\star &= (I-\Pi)v^\star \\
			(I-\Pi)x^\star &= (I-\Pi)w_1^\star - (I-\Pi)v^\star\\
			&= 0.
		\end{align}
		Thus $x_i^\star = x_j^\star$ for all $i,j \in \{1,\dots,n\}$. Next we show that $x_i^\star = x_{\text{opt}}$. From \eqref{eq:up1} then plugging in \eqref{eq:up6}, we have
		\begin{align}
			-\alpha u^\star - \zeta v^\star &= 0 \\
			u^\star &= -\frac{\zeta}{\alpha} v^\star = -\frac{\zeta}{\alpha} \mathcal{L} \hat{y}^\star \\
			\1^\tr u^\star &= -\frac{\zeta}{\alpha} \1^\tr \mathcal{L} \hat{y}^\star \\
			\sum_{i=1}^n u_i^\star &= 0 \\
			\rightarrow \sum_{i=1}^n \nabla\n f_i(x_i^\star) &= 0\\
			\rightarrow x_i^\star &= x_{\text{opt}} \; \forall \; i \in \{1,\dots,n\}.
		\end{align}
		Thus the fixed point is optimal.
		
		Next, to construct the fixed point we define
		\begin{equation}
			\begin{aligned}
				x^\star &= \1 x_{\text{opt}}, & u^\star &= \nabla\n f(x^\star) \\
				v^\star &= -\frac{\alpha}{\zeta} u^\star, & w_1^\star &= x^\star + v^\star .
			\end{aligned}
		\end{equation}
		Then $\hat{w}_2^\star$ is the solution to the equation
		\begin{equation}
			\label{eq:w2sol}
			\zeta \eta \mathcal{L} \hat{w}_2^\star = -\alpha(I-\delta \mathcal{L})u^\star.
		\end{equation}
		Since $\hat{w}_2^k = \La^+ \La w_2^k$ (i.e., $\hat{w}_2^k$ is in the row space of $\La$), we write $\hat{w}_2^\star$ in closed form as
		\begin{equation}
			\hat{w}_2^\star = \frac{\alpha}{\zeta \eta}\La^+(\delta \La - I) u^\star.
		\end{equation}
		Finally, setting $\hat{y}^\star = \delta w_1^\star + \eta \hat{w}_2^\star$ completes the proof.
	\end{proof}
	
	\begin{remark} \label{rem:switching}
		If the graph is switching but converges in time such that the limit of the sequence of Laplacians exists, as with a weight balancer, then a solution to \eqref{eq:w2sol} still exists and an optimal fixed point can still be found. If the sequence of Laplacians does not have a limit, then each switch of Laplacian will introduce a transient to the system, after which the system will converge to the new optimal fixed point. 
        The dependence of the fixed point on the Laplacian in \eqref{eq:w2sol} is directly related to the order of the integrator block and the Laplacian block and is an inherent drawback of known self-healing methods. The fixed points of non-self-healing methods can be independent of the graph Laplacian, which enables them to converge in the presence of switching Laplacians \cite{sunvanles20}.
	\end{remark}
 
	\subsection{Convergence}
	Following the approaches in \cite{lesrecpac16,sunhules17,sunvanles20}, we prove stability using a set of linear matrix inequalities. First we split our modified system from \eqref{eq:gp} into consensus and disagreement components. We define 
	\begin{align}
		A_m &= A_p \otimes \Pi + A_q \otimes (I-\Pi) \\
		B_{mu} &= B_{pu} \otimes \Pi + B_{qu} \otimes (I-\Pi)\\
		B_{mv} &= B_{pv} \otimes \Pi + B_{qv} \otimes (I-\Pi)
	\end{align}
	\begin{equation}
		\begin{aligned}
			A_p &= \begin{bmatrix}1 & 0 \\ 0 & 0\end{bmatrix}, & B_{pu} &= \begin{bmatrix}-\alpha\\0\end{bmatrix}, & B_{pv}&=\begin{bmatrix}-\zeta \\ 0\end{bmatrix}\\
			A_q &= \begin{bmatrix}1 & 0\\ 1 & 1\end{bmatrix}, & B_{qu} &= \begin{bmatrix}
				-\alpha \\0\end{bmatrix}, & B_{qv} &=\begin{bmatrix} -\zeta\\ -1
			\end{bmatrix}.
		\end{aligned}
	\end{equation}
	We also define the matrices
	\begin{equation}
		M_0 = \begin{bmatrix}
			-2\mu L & L+\mu \\
			L+\mu & -2
		\end{bmatrix} \; \; \text{and} \; \; M_1 =
		\begin{bmatrix}
			\sigma^2-1 & 1\\1 & -1
		\end{bmatrix}.
	\end{equation}
	
	Notice that $M_0$ is associated with the sector bound on $\nabla F$ from \ref{a:1} and that $M_1$ is associated with the sector bound on $\La$ with inputs from Lemma 1.

	We now make a statement analogous to Theorem 10 in \cite{sunvanles20}.
	\begin{theorem}
		If there exists $P,Q \in \R^{2\times 2}$, $\lambda_0,\lambda_1 \in \R$, and $\rho \in (0,1)$, with $P,Q\succ 0$ and $\lambda_0,\lambda_1\geq 0$  such that
		\begin{gather}
			\label{eq:LMI1}
			[\star]^\tr
			\left[ \begin{array}{cc|c}
				P & 0 & 0\\
				0 & -\rho^2P & 0\\
				\hline
				0 & 0 & \lambda_0M_0
			\end{array}\right]
			\left[ \begin{array}{cc}
				A_p & B_{pu} \\
				I & 0 \\
				\hline
				C_{x} & D_{xu} \\
				0 & I
			\end{array}\right] \preceq 0, \\	
			[\star]^\tr
			\label{eq:LMI2}
			\arraycolsep=2.5pt
			\left[ \begin{array}{cc|c|c}
				Q & 0 & 0 & 0\\
				0 & -\rho^2Q & 0 & 0\\
				\hline
				0 & 0 & \lambda_0M_0 & 0 \\
				\hline
				0 & 0 & 0 & \lambda_1M_1 \\
			\end{array}\right]
			\left[ \begin{array}{ccc}
				A_q & B_{qu} & B_{qv} \\
				I & 0 & 0 \\
				\hline
				C_{x} & D_{xu} & D_{xv} \\
				0 & I & 0 \\
				\hline
				C_{y} & D_{yu} & D_{yv}\\
				0 & 0 & I
			\end{array}\right]\preceq 0,
		\end{gather}
		then the following is true for the trajectories of $G_m$:
		\begin{equation}\label{eq:main}
			\Bigg \| \begin{bmatrix}
				w_1^k - w_1^\star\\[.1cm] \hat{w}_2^k - \hat{w}_2^\star
			\end{bmatrix} \Bigg \|^2 \leq \operatorname{cond}(T)\rho^{2k} \Bigg \| \begin{bmatrix}
				w_1^0 - w_1^\star\\[.1cm] \hat{w}_2^0 - \hat{w}_2^\star
			\end{bmatrix} \Bigg \|^2
		\end{equation}
		for a fixed point $(w_1^\star,\hat{w}_2^\star, x^\star,\hat{y}^\star,u^\star,v^\star)$, where $T = P \otimes \Pi_n + Q \otimes (I_n - \Pi_n)$. Thus the output $x^k$ of Algorithm 1 converges to the optimizer with the linear rate $\rho$.
	\end{theorem}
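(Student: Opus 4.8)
The plan is to run the dissipativity/IQC argument used for the analogous Theorem~10 of \cite{sunvanles20} on the reduced system $G_m$, working with $d=1$ so that all signals lie in $\R^n$. First I would pass to error coordinates $\tilde w_1^k=w_1^k-w_1^\star$, $\tilde w_2^k=\hat w_2^k-\hat w_2^\star$, and likewise $\tilde x^k,\tilde y^k,\tilde u^k,\tilde v^k$, where $(w_1^\star,\hat w_2^\star,x^\star,\hat y^\star,u^\star,v^\star)$ is the fixed point furnished by the previous theorem. Since $G_m$ is LTI, the errors satisfy exactly the recursion \eqref{eq:up1}--\eqref{eq:up6} with $\tilde u^k=\nabla\n F(x^k)-\nabla\n F(\1 x_{\text{opt}})$ and $\tilde v^k=\La\tilde y^k$. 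Consequently \ref{a:1} applies verbatim, giving $[\star]^\tr(M_0\otimes I_n)[\tilde x^k;\tilde u^k]\ge 0$ for every $k$, and Lemma~1 applied to $y=\tilde y^k$ gives $[\star]^\tr(M_1\otimes(I_n-\Pi_n))[\tilde y^k;\tilde v^k]\ge 0$ for every $k$.

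Next I would exploit the Kronecker structure. Under \ref{a:3} one has $\La\Pi_n=\Pi_n\La=0$, so $\tilde v^k$ lies in the disagreement subspace, $\tilde w_2^k=(I_n-\Pi_n)\tilde w_2^k$, and applying $\Pi_n$ to \eqref{eq:up1} kills its $v$-term; this is precisely why $G_m$ decomposes into a consensus block $(A_p,B_{pu})$ and a disagreement block $(A_q,B_{qu},B_{qv})$ sharing the output data. Set $V^k=[\star]^\tr T\,[\tilde w_1^k;\tilde w_2^k]$ with $T=P\otimes\Pi_n+Q\otimes(I_n-\Pi_n)$; since $\Pi_n\tilde w_2^k=0$, $V^k$ is a $P$-weighted quadratic in the consensus components of the state plus a $Q$-weighted quadratic in the disagreement components. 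Now left- and right-multiply \eqref{eq:LMI1}, tensored with $\Pi_n$, by the stacked vector built from the consensus components of $(\tilde w_1^k,\tilde w_2^k)$ and from $\tilde u^k$, and left- and right-multiply \eqref{eq:LMI2}, tensored with $I_n-\Pi_n$, by the stacked vector built from the disagreement components of $(\tilde w_1^k,\tilde w_2^k)$ and from $(\tilde u^k,\tilde v^k)$. Each product is a scalar inequality whose leading part is a Lyapunov difference and whose remaining part is $\lambda_0$ times a projected sector term (plus, for \eqref{eq:LMI2}, $\lambda_1$ times the graph term of Lemma~1); the two Lyapunov differences sum to $V^{k+1}-\rho^2 V^k$. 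Adding the two inequalities and using $M_0\otimes\Pi_n+M_0\otimes(I_n-\Pi_n)=M_0\otimes I_n$ to recombine the sector terms yields
\[
V^{k+1}-\rho^{2}V^{k}\ \le\ -\lambda_0\,[\star]^\tr(M_0\otimes I_n)[\tilde x^k;\tilde u^k]\ -\ \lambda_1\,[\star]^\tr(M_1\otimes(I_n-\Pi_n))[\tilde y^k;\tilde v^k].
\]

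By \ref{a:1} and Lemma~1 the right-hand side is nonpositive, so $V^{k+1}\le\rho^2 V^k$ and hence $V^k\le\rho^{2k}V^0$. Because $P,Q\succ0$, $T$ is positive definite with spectrum the union of those of $P$ and $Q$, so $\lambda_{\min}(T)\,\norm{[\tilde w_1^k;\tilde w_2^k]}^2\le V^k\le\lambda_{\max}(T)\,\norm{[\tilde w_1^k;\tilde w_2^k]}^2$; chaining these bounds gives \eqref{eq:main}. Finally, \eqref{eq:up3}--\eqref{eq:up6} give $\tilde x^k=\tilde w_1^k-\La(\delta\tilde w_1^k+\eta\tilde w_2^k)$, hence $\norm{x^k-\1 x_{\text{opt}}}=\norm{\tilde x^k}\le c\,\norm{[\tilde w_1^k;\tilde w_2^k]}$ with $c$ depending only on $\norm{\La},\delta,\eta$; combined with \eqref{eq:main}, $\norm{x^k-\1 x_{\text{opt}}}$ decays like $\rho^k$, i.e.\ the estimates converge R-linearly at rate $\rho$.

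The step requiring the most care is the recombination in the middle paragraph: since $M_0$ is indefinite, the projected sector terms $[\star]^\tr(M_0\otimes\Pi_n)[\tilde x^k;\tilde u^k]$ and $[\star]^\tr(M_0\otimes(I_n-\Pi_n))[\tilde x^k;\tilde u^k]$ are \emph{not} individually sign-definite, so \ref{a:1} can be invoked only after \eqref{eq:LMI1} and \eqref{eq:LMI2} have been added (this also forces $\lambda_0$ to be the same scalar in both LMIs); one must likewise keep the Kronecker-factor orderings consistent and check that $\La\Pi_n=0$ genuinely removes the $v$-channel from the consensus block, so that \eqref{eq:LMI1} needs no $\lambda_1 M_1$ term. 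The remainder is the routine quadratic bookkeeping of a dissipativity estimate.
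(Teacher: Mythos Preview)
Your proposal is correct and follows essentially the same approach as the paper: tensor \eqref{eq:LMI1} with $\Pi_n$ and \eqref{eq:LMI2} with $I_n-\Pi_n$, sandwich each by the appropriate stacked error vector, add the two resulting scalar inequalities so the $M_0$ terms recombine into $M_0\otimes I_n$, invoke \ref{a:1} and Lemma~1 to discard the IQC terms, and iterate the Lyapunov decrease. Your explicit remark that the projected $M_0$ terms are not individually sign-definite (forcing the same $\lambda_0$ in both LMIs and requiring the addition before applying \ref{a:1}) is a point the paper leaves implicit; the only cosmetic difference is that the paper reaches \eqref{last_step} via a telescoping sum after multiplying by $\rho^{-2k}$, whereas you iterate $V^{k+1}\le\rho^2 V^k$ directly---these are equivalent.
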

	
	\begin{proof}
		Equation \eqref{eq:main} follows directly from Theorem 4 of \cite{lesrecpac16}. First, we define the quantity $\xi^k = [(w_1^k)^\tr \; (\hat{w}_2^k)^\tr]^\tr$. We then begin by taking the Kronecker product of \eqref{eq:LMI1} with $\Pi_n$ and multiplying on both sides by $[(\tilde{\xi}^k)^\tr \; (\tilde{u}^k)^\tr]$ and its transpose respectively, as well as taking the Kronecker product of \eqref{eq:LMI2} with $I_n - \Pi_n$ and multiplying on both sides by $[(\tilde{\xi}^k)^\tr \; (\tilde{u}^k)^\tr \; (\tilde{v}^k)^\tr]$ and its transpose respectively. Making use of the state space equations \eqref{eq:up1}-\eqref{eq:up6}, we get the equations
		\begin{gather}
			(\tilde{\xi}^{k+1})^\tr (P \otimes \Pi_n) \tilde{\xi}^{k+1} -\rho^2(\tilde{\xi}^{k})^\tr (P\otimes \Pi_n) \tilde{\xi}^{k} \\ \notag
			+ \lambda_0 [\star]^\tr (M_0 \otimes \Pi_n) \begin{bmatrix} \tilde{x}^k \\ \tilde{u}^k\end{bmatrix} \leq 0 \\
			(\tilde{\xi}^{k+1})^\tr (Q \otimes I_n-\Pi_n) \tilde{\xi}^{k+1} -\rho^2(\tilde{\xi}^{k})^\tr (Q\otimes I_n-\Pi_n) \tilde{\xi}^{k} \\ \notag
			+ \lambda_0 [\star]^\tr (M_0 \otimes I_n-\Pi_n) \begin{bmatrix} \tilde{x}^k \\ \tilde{u}^k\end{bmatrix} + \lambda_1 [\star]^\tr (M_1 \otimes I_n-\Pi_n) \begin{bmatrix} \tilde{\hat{y}}^k \\\tilde{v}^k\end{bmatrix} \leq 0.
		\end{gather}
		
		We then add together these equations to get
		
		\begin{gather}\label{stl_step}
			(\tilde{\xi}^{k+1})^\tr T \tilde{\xi}^{k+1} -\rho^2(\tilde{\xi}^{k})^\tr T \tilde{\xi}^{k} \\ \notag
			+ \lambda_0 [\star]^\tr (M_0 \otimes I_n) \begin{bmatrix} \tilde{x}^k \\ \tilde{u}^k\end{bmatrix} + \lambda_1 [\star]^\tr (M_1 \otimes I_n-\Pi_n) \begin{bmatrix} \tilde{\hat{y}}^k \\\tilde{v}^k\end{bmatrix} \leq 0.
		\end{gather}
		
		Notice that the terms associated with $M_0$ and $M_1$ are quadratic constraints and nonnegative, therefore we can subtract them from both sides. Next we multiply by $\rho^{-2k}$ and sum over $k$ which causes a telescoping sum and yields
		\begin{equation} \label{last_step}
			(\tilde{\xi}^{k})^\tr T \tilde{\xi}^{k} \leq \rho^{2k}(\tilde{\xi}^{0})^\tr T \tilde{\xi}^{0},
		\end{equation} 
		which leads directly to the statement of \eqref{eq:main}.
		
		Since the states of $G_m$ are converging at a linear rate $\rho$, the rest of the signals in the system (including $x^k$) converge to an optimal fixed point at the same rate. Additionally, the trajectories of $G_m$ and $G_{\n e}$ (Algorithm 1) are the same, save for $\hat{w}_2^k$ and $\hat{y}^k$, so $x^k$ in Algorithm 1 also converges to the optimizer with linear rate $\rho$. 
	\end{proof}

	\begin{remark}
		In Algorithm 1, the signal $x^k$ converges to the optimizer with linear rate $\rho$ as the internal signals $w_2^k$ and $y^k$ grow linearly. The signals $w_2^k$ and $y^k$ will not become large enough to cause overflow errors until well after the algorithm has converged to the optimizer.
	\end{remark}
	\begin{remark}
		We tested the convergence rates for our algorithm with Zames-Falb IQCs in place of sector IQCs but saw no improvement. 
	\end{remark}
	
	\begin{remark}
		The explicit dependence of LMIs \eqref{eq:LMI1} and \eqref{eq:LMI2} on $\mu$ and $L$ can be removed to facilitate comparison of objective functions based on condition ratio $\kappa$ alone. First, we define a new signal $g^k = \frac{1}{L}u^k$ and a new parameter $\alpha' = L \alpha$, such that $\alpha u^k = \alpha' g^k$. We generate new LMIs that only depend on $\kappa$ by multiplying LMIs \eqref{eq:LMI1} and \eqref{eq:LMI2} on both sides respectively by the matrices
		\begin{align*}
			\frac{1}{\sqrt{\mu L}}\begin{bmatrix} 1 & 0 & 0\\ 0 & 1 & 0 \\0 & 0 & L \end{bmatrix} \text{ and } \frac{1}{\sqrt{\mu L}}\begin{bmatrix} 1 & 0 & 0 & 0\\ 0 & 1 & 0 & 0 \\0 & 0 & L & 0 \\ 0 & 0 & 0 & 1 \end{bmatrix}.
		\end{align*}
		This corresponds to using the IQC
		\[ [\star]^\tr
		\begin{bmatrix}
			-2 & \kappa+1\\
			\kappa+1 & -2\kappa
		\end{bmatrix}
		\begin{bmatrix}
			(x_i-x_{\text{opt}})^\tr\\
			L(\nabla\n f_i(x_i)-\nabla\n f_i(x_{\text{opt}}))^\tr
		\end{bmatrix} \geq 0.\]
		If LMIs \eqref{eq:LMI1} and \eqref{eq:LMI2} are feasible then these new LMIs will be feasible as well with $P' = \frac{1}{\mu L}P$ and $Q' = \frac{1}{\mu L}Q$.
	\end{remark}
	
	\section{Robustness to Packet Loss}
	\subsection{Packet-loss protocol}
	We next give our agents some additional memory so that they can substitute previously transmitted values when a packet is lost. Each agent $i\in \{1,\dots,n\}$ maintains an edge state $r_{ij}^k$ for each $j\in \mathcal{N}_{\text{in}}(i)$ (the set of neighbors who transmit to $i$). Whenever agent $i$ receives a message from agent $j$, it updates the state $r_{ij}$ accordingly; however, if at time $k$ no message from neighbor $j$ is received, agent $i$ must estimate what would have likely been transmitted. One potential strategy is to substitute in the last message received, but because $y_j$ is growing linearly in quasi steady state, this naive strategy would ruin steady-state 
	accuracy. Instead we must account for the linear growth present in our algorithm, which we can do by analyzing the quantity $y_j^{k}-y_j^{k-1}$ at the quasi fixed point $(w_1^\star, x^\star,u^\star,v^\star)$:
	\begin{align}
		y_j^{k}-y_j^{k-1} &= \delta(w_{1j}^\star-w_{1j}^\star) + \eta(w_{2j}^k - w_{2j}^{k-1})\\
		&= \eta(w_{1j}^\star-v_j^\star)\\
		&= \eta x_j^\star \approx \eta x_i^k
	\end{align}
	Therefore, when a packet is not received by a neighbor, agent $i$ scales its estimate of the optimizer and adds it to its previously received (or estimated) message. The packet-loss protocol is summarized in Algorithm 2 (SH-SVL). By construction, the modifications included in SH-SVL will not alter the quasi fixed points of Algorithm 1 and, in the absence of dropped packets, the state trajectories of SH-SVL are equivalent to those of Algorithm 1. In the following sections, we present a similar analysis to that of the base algorithm for the simplified case of synchronous packet loss with an unknown graph and the general case of edgewise packet loss where knowledge of the graph topology must be included in the analysis.
	
	\begin{remark}
		SH-SVL can be modified to include a forgetting factor $q$. If agent $i$ does not receive a packet from neighbor $j$ in $q$ time steps, then agent $i$ assumes that the communication link has been severed and clears $r_{ij}$ from memory.
	\end{remark}
	
	\begin{algorithm}[t]
		\SetAlgoLined
		\LinesNotNumbered
		\SetCommentSty{\small}
		\KwSty{Initialization:} Each agent $i\in\{1,...,n\}$ chooses $w_{1i}^0,w_{2i}^0 \in \R^{1 \times d}$ arbitrarily. $\La \in \R^{n \times n}$ is the graph Laplacian.  All $r_{ij}$ are initialized the first time a message is received from a neighbor.\\
		\For{$k=0,1,2,...$}{
			\For{$i\in\{1,...,n\}$}{
				\KwSty{Local communication}\\
				$y_i^k = \delta w_{1i}^k + \eta w_{2i}^k$\\[2pt]
				\For{$j\in \mathcal{N}_{\text{in}}(i)\cup i$}{
					\eIf{Packet from $j$ received by $i$}{
						$r_{ij}^k = y_{j}^k$
					}
					{$r_{ij}^k = \eta x_i^{k-1} + r_{ij}^{k-1}$
				}}
				$v_i^k = \sum_{j=1}^n\mathcal{L}_{ij} r_{ij}^k$\\
				\KwSty{Local gradient computation}\\
				$x_i^k = w_{1i}^k - v_i^k$ \texttt{\scriptsize // Update the optimizer estimate.}\\[2pt] 
				$u_i^k = \nabla\n f_i(x_i^k)$\\
				\KwSty{Local state update}\\
				$w_{1i}^{k+1} = w_{1i}^k - \alpha u_i^k - \zeta v_i^k$\\[2pt]
				$w_{2i}^{k+1} = w_{1i}^k + w_{2i}^k - v_i^k$
		}}
		\Return $x_i$
		\caption{Self-Healing Distributed Gradient Descent with Packet-loss protocol (SH-SVL)}
		\label{alg:2}
	\end{algorithm}
	
	\subsection{Synchronous Loss}
	The case of synchronous packet loss is applicable to noisy channels where all communications are affected simultaneously, such as in a storm or other wide-ranging phenomena. Since all communications are affected simultaneously, the specific topology of the network is allowed to be unknown, as in the base algorithm, which allows for the analysis of algorithm performance over a range of connectivity parameters and sizes instead of specific graph topologies. At each time step $k$, we assume packets are lost in the entire network with an independent and identically distributed probability $p_\ell$. We note that the following analysis can be adapted to the case where the packet loss probability follows a Markov chain; we refer the reader to the body of work on Markov jump linear systems for more information~\cite{cosframar05}. 
	
	To begin the analysis, we first assume that each agent maintains a memory state for its own output and considers its own output lost when all packets in the network are lost. For the purposes of the centralized analysis, we assume without loss of generality that each agent $i$ maintains a state for every node in the network and stores it in a vector $r_i^k$. Thus if $l$ packets are lost consecutively at time $k>l$, we have
	\begin{gather}
		r_i^k = y^{k-l} + \eta \sum_{j=1}^l x_i^{k-j}\1 \\
		v_i^k = \La_i^\tr r_i^k = \La_i^\tr y^{k-1}.
	\end{gather}
	
	Since the $x_i$ terms are annihilated by the Laplacian, in the analysis we can replace all vectors $r_i^k$ with a single vector $\hat{r}^k$, whose update is given by:
	\begin{equation} \label{eq:r_up_sync}
		\hat{r}^{k+1} = \begin{cases}
			\delta w_1^{k+1} + \eta \hat{w}_2^{k+1} & \text{if Success} \\
			\hat{r}^{k} & \text{if Loss}.
		\end{cases}
	\end{equation}
	From Equation~\eqref{eq:r_up_sync}, it is straightforward to see that the fixed point from Theorem 1 is maintained with $\hat{r}^\star = \delta w_1^\star + \eta \hat{w}_2^\star$ and that the trajectories of $x^k$ are still unaffected by the projection of $\hat{w}_2^k$.
	 With these modification, the system matrix at each time is $G_{\iota_k} \in \{G_s,G_\ell\}$ where $G_s$ corresponds to successful packet transmission in the network (with probability $1-p_\ell$) and $G_\ell$ corresponds to synchronous packet loss in the network (with probability $p_\ell$). They are as follows:
	 
	 \begin{align*}
	 	A_{\iota_k} &= A_{\iota_kp} \otimes \Pi + A_{\iota_kq} \otimes (I-\Pi), \\
	 	B_{\iota_ku} &= B_{\iota_kpu} \otimes \Pi + B_{\iota_kqu} \otimes (I-\Pi),\\
	 	B_{\iota_kv} &= B_{\iota_kpv} \otimes \Pi + B_{\iota_kqv} \otimes (I-\Pi),
	 \end{align*}
	 \begin{equation*}
	 	\begin{alignedat}{3}
	 		A_{sp} &= \begin{bmatrix}1 & 0 & 0 \\ 0 & 0 & 0\\ \delta & 0 & 0\end{bmatrix}, &\, B_{spu} &= \begin{bmatrix}-\alpha\\0\\-\alpha\delta\end{bmatrix},  &\, B_{spv} &=\begin{bmatrix}-\zeta \\ 0\\-\delta \zeta\end{bmatrix},\\	
	 		A_{sq} &= \begin{bmatrix}1 & 0 & 0 \\ 1 & 1 & 0\\ \delta +\eta& \eta & 0\end{bmatrix}, & B_{squ} &= \begin{bmatrix}-\alpha\\0\\-\alpha\delta\end{bmatrix}, & B_{sqv} &= \begin{bmatrix}-\zeta \\ -1\\-\delta \zeta-\eta\end{bmatrix},
	 	\end{alignedat}
	 \end{equation*}
 
 	\begin{equation*}
 		\begin{alignedat}{3}
 			A_{\ell p} &= \begin{bmatrix}1 & 0 & 0 \\ 0 & 0 & 0\\ 0 & 0 & 1\end{bmatrix}, &\, B_{\ell pu} &= \begin{bmatrix}-\alpha\\0\\0\end{bmatrix}, &\, B_{\ell pv} &=\begin{bmatrix}-\zeta \\ 0\\0\end{bmatrix},\\	
 			A_{\ell q} &= \begin{bmatrix}1 & 0 & 0 \\ 1 & 1 & 0\\ 0& 0 & 1\end{bmatrix}, & B_{\ell qu} &= \begin{bmatrix}-\alpha\\0\\0\end{bmatrix}, & B_{\ell qv} &= \begin{bmatrix}-\zeta \\ -1\\0\end{bmatrix},
 		\end{alignedat}
 	\end{equation*}
 
 	\begin{alignat*}{3}
 		C_x &= \begin{bmatrix}
 			I & 0 & 0
 		\end{bmatrix}, &\quad C_y &= \begin{bmatrix}
 			0 & 0 & I
 		\end{bmatrix}, &\quad D_{xv} &= -I.
 	\end{alignat*}

	\begin{equation}\label{eq:gs}
		G_{\iota_k} = \left[ \begin{array}{c|c|c}
			A_{\iota_k} & B_{\iota_ku} & B_{\iota_kv}\\
			\hline
			C_x & 0 & D_{xv}\\
			\hline
			C_y & 0 & 0\end{array}\right].
	\end{equation}

	\begin{theorem}
		If packets are lost synchronously with an i.i.d. probability $p_\ell$ and there exists $P,Q \in \R^{3\times 3}$, $\lambda_0,\lambda_1 \in \R$, and $\rho \in (0,1)$, with $P,Q\succ 0$ and $\lambda_0,\lambda_1\geq 0$  such that
		\begin{gather}
			\label{eq:LMI_sync1}
			\sum_{o\in\{s,l\}} p_o[\star]^\tr
			\left[ \begin{array}{cc|c}
				P & 0 & 0\\
				0 & -\rho^2P & 0\\
				\hline
				0 & 0 & \lambda_0M_0
			\end{array}\right]
			\left[ \begin{array}{cc}
				A_{op} & B_{opu} \\
				I & 0 \\
				\hline
				C_{x} & 0 \\
				0 & I
			\end{array}\right] \preceq 0, \\	
			\sum_{o\in\{s,l\}} p_o [\star]^\tr
			\label{eq:LMI_sync2}
			\arraycolsep=2.5pt
			\left[ \begin{array}{cc|c|c}
				Q & 0 & 0 & 0\\
				0 & -\rho^2Q & 0 & 0\\
				\hline
				0 & 0 & \lambda_0M_0 & 0 \\
				\hline
				0 & 0 & 0 & \lambda_1M_1 \\
			\end{array}\right] \times \notag \\
			\left[ \begin{array}{ccc}
				A_{oq} & B_{oqu} & B_{oqv} \\
				I & 0 & 0 \\
				\hline
				C_x & 0 & D_{xv} \\
				0 & I & 0 \\
				\hline
				C_y & 0 & 0\\
				0 & 0 & I
			\end{array}\right]\preceq 0,
		\end{gather}
		then the following is true for the trajectories of $G_{\iota_k}$:
		\begin{equation}\label{eq:main_sync}
			\mathbb{E} \Bigg[\Bigg \| \begin{bmatrix}
				w_1^k - w_1^\star\\[.1cm] \hat{w}_2^k - \hat{w}_2^\star \\[.1cm] r^k - r^\star
			\end{bmatrix} \Bigg \|^2\Bigg] \leq \operatorname{cond}(T)\rho^{2k} \Bigg \| \begin{bmatrix}
				w_1^0 - w_1^\star\\[.1cm] \hat{w}_2^0 - \hat{w}_2^\star\\[.1cm] r^0 - r^\star
			\end{bmatrix} \Bigg \|^2
		\end{equation}
		for a fixed point $(w_1^\star,\hat{w}_2^\star,r^\star, x^\star,\hat{y}^\star,u^\star,v^\star)$, where $T = P \otimes \Pi_n + Q \otimes (I_n - \Pi_n)$. Thus the output $x^k$ of SH-SVL converges in mean square to the optimizer with the linear rate $\rho$ and additionally converges almost surely.
	\end{theorem}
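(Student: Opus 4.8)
The plan is to replay the proof of Theorem~2 — itself an application of the dissipation-inequality argument of \cite{lesrecpac16} — with the global state augmented by the packet-loss memory $r^k$, and with every deterministic norm inequality replaced by its expectation over the i.i.d.\ loss process, as in the Markov-jump extensions of this framework \cite{huseiran17,cosframar05}.

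First I would record that the augmented system has a fixed point common to \emph{both} modes. Collapsing the per-agent memories into the single vector $r^k$ (legitimate because the $r_i^k$ differ only by vectors in the nullspace of $\La$, which is exactly why \eqref{eq:r_up_sync} is the correct collapsed update), Equation~\eqref{eq:r_up_sync} shows that $r^\star=\delta w_1^\star+\eta\hat w_2^\star$ is a fixed point of the loss update ($r^{k+1}=r^k$) and of the success update (where $\delta w_1^{k+1}+\eta\hat w_2^{k+1}$ evaluates to $r^\star$ at equilibrium), while $v^\star=\La r^\star=\La\hat y^\star$ together with Theorem~1 supplies the remaining components of $(w_1^\star,\hat w_2^\star,r^\star,x^\star,\hat y^\star,u^\star,v^\star)$ with $x^\star=\1 x_{\textnormal{opt}}$. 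Since the fixed point does not depend on the mode, the error coordinate $\tilde\xi^k=[(\tilde w_1^k)^\tr \; (\tilde{\hat{w}}_2^k)^\tr \; (\tilde r^k)^\tr]^\tr$ is well defined and evolves under the switched linear system $G_{\iota_k}$ in feedback with the same two uncertain blocks as in the base algorithm: the gradient $\nabla F$, obeying the sector IQC with matrix $M_0$ by \ref{a:1}, and the Laplacian $\La$, whose input is now the output $y^k=C_y\xi^k=r^k$ and which obeys the IQC with matrix $M_1$ by Lemma~1 (the consensus part of $r^k$ lies in the nullspace of $\La$, so the $(I_n-\Pi_n)$-projected form applies verbatim). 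It is worth noting that the only coordinate whose update actually depends on $\iota_k$ is $r^{k+1}$; hence $x^k$, $u^k$, $y^k$, $v^k$ are $\mathcal{F}_k$-measurable and the $M_0$- and $M_1$-quadratic forms at time $k$ are mode-independent.

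Next, for each realization $\iota_k=o\in\{s,\ell\}$ I would carry out the consensus/disagreement splitting from the proof of Theorem~2: Kronecker-multiply \eqref{eq:LMI_sync1} by $\Pi_n$ and \eqref{eq:LMI_sync2} by $I_n-\Pi_n$, sandwich them by $[(\tilde\xi^k)^\tr \; (\tilde u^k)^\tr]$ and $[(\tilde\xi^k)^\tr \; (\tilde u^k)^\tr \; (\tilde v^k)^\tr]$ respectively, substitute the mode-$o$ state equations, and add, using $T=P\otimes\Pi_n+Q\otimes(I_n-\Pi_n)$, to obtain for that realization
\[
(\tilde\xi^{k+1})^\tr T\tilde\xi^{k+1}-\rho^2(\tilde\xi^k)^\tr T\tilde\xi^k+\lambda_0[\star]^\tr(M_0\otimes I_n)\!\begin{bmatrix}\tilde x^k\\\tilde u^k\end{bmatrix}+\lambda_1[\star]^\tr(M_1\otimes(I_n-\Pi_n))\!\begin{bmatrix}\tilde y^k\\\tilde v^k\end{bmatrix}\le 0 .
\]
Because \eqref{eq:LMI_sync1}--\eqref{eq:LMI_sync2} are the $p_o$-weighted sums over $o\in\{s,\ell\}$, taking the conditional expectation $\mathbb{E}[\,\cdot\mid\mathcal{F}_k]$ of this inequality — which, by the observation above, only averages $(\tilde\xi^{k+1})^\tr T\tilde\xi^{k+1}$ over the draw of $\iota_k$ — produces exactly the expected version. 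The two IQC terms are nonnegative, so discarding them leaves $\mathbb{E}[(\tilde\xi^{k+1})^\tr T\tilde\xi^{k+1}\mid\mathcal{F}_k]\le\rho^2(\tilde\xi^k)^\tr T\tilde\xi^k$; taking full expectations and iterating gives $\mathbb{E}[(\tilde\xi^k)^\tr T\tilde\xi^k]\le\rho^{2k}(\tilde\xi^0)^\tr T\tilde\xi^0$, and the bounds $\lambda_{\min}(T)\,\mathbb{E}\|\tilde\xi^k\|^2\le\mathbb{E}[(\tilde\xi^k)^\tr T\tilde\xi^k]$ and $(\tilde\xi^0)^\tr T\tilde\xi^0\le\lambda_{\max}(T)\|\tilde\xi^0\|^2$ yield \eqref{eq:main_sync}. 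Since $\tilde x^k=\tilde w_1^k-\La\tilde r^k$ is a fixed linear image of $\tilde\xi^k$, this gives $\mathbb{E}\|x^k-x^\star\|^2\le C\rho^{2k}$ for a constant $C$ depending on $\operatorname{cond}(T)$, $\|\La\|$ and $\|\tilde\xi^0\|$, i.e.\ mean-square convergence at rate $\rho$. Almost-sure convergence then follows by Borel--Cantelli: for every $\epsilon>0$ the probabilities $\mathbb{P}(\|x^k-x^\star\|\ge\epsilon)\le C\rho^{2k}/\epsilon^2$ are summable in $k$, so $\{\|x^k-x^\star\|\ge\epsilon\}$ occurs only finitely often almost surely; intersecting over $\epsilon=1/m$, $m\in\mathbb{N}$, gives $x^k\to x^\star$ almost surely.

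The Kronecker bookkeeping, the telescoping sum, and the passage through $\operatorname{cond}(T)$ are the same as in Theorem~2 and I would not reproduce them. The step I expect to demand the most care is the structural one at the start: checking that collapsing the $r_i^k$ to a single $r^k$ is justified, that $r^\star$ really is a fixed point of \emph{both} $G_s$ and $G_\ell$ (without which the error coordinates — and hence \eqref{eq:main_sync} — are meaningless), and that the $M_1$ IQC, stated in Lemma~1 for an arbitrary input to $\La$, still applies verbatim when that input is the augmented state $r^k$ rather than $\hat y^k$. Once this compatibility is secured, the i.i.d.\ structure of the loss makes the stochastic part lighter than general Markov-jump analysis, since a single pair $(P,Q)$ rather than one pair per mode suffices and no transition-probability coupling enters \eqref{eq:LMI_sync1}--\eqref{eq:LMI_sync2}.
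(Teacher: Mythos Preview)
Your proposal is correct and follows essentially the same route as the paper, which simply says the result ``follows directly from Theorem~2 above by redefining the quantity $\xi^k = [(w_1^k)^\tr \; (\hat{w}_2^k)^\tr \; (r^k)^\tr]^\tr$ and taking expectations where appropriate,'' with the almost-sure part via Markov's inequality and Borel--Cantelli. Your write-up is in fact more explicit than the paper's: you correctly isolate that only the $(\tilde\xi^{k+1})^\tr T\tilde\xi^{k+1}$ term depends on the mode (so the $p_o$-weighted LMI is exactly the conditional expectation of the one-step dissipation inequality), and you verify that $r^\star$ is a common fixed point of both $G_s$ and $G_\ell$ --- points the paper leaves implicit; just be careful with the phrasing ``obtain for that realization,'' since the displayed one-step inequality need not hold mode-by-mode but only after the $p_o$-averaging you immediately invoke.
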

	\begin{proof}
		The proof of Theorem 3 follows directly from Theorem 2 above by redefining the quantity $\xi^k = [(w_1^k)^\tr \; (\hat{w}_2^k)^\tr \; (r^k)^\tr]^\tr$ and taking expectations where appropriate. See also Theorem 1 in \cite{huseiran17}.
		 Almost sure convergence can be seen by summing both sides of Equation~\eqref{eq:main_sync} over $k$, then applying Markov's inequality and the Borel-Cantelli lemma.
	\end{proof}

	\begin{remark}
		The same system $G_{\iota_k}$ can be used in a periodic loss setting to analyze the case when multiple local computation steps are taken for every communication step. The setup follows directly from the results in \cite{fryfarsei17}; however we do not include it here because the convergence rate is slower than the setting where one computation step is performed for every communication step.
	\end{remark}

	\subsection{Edgewise Packet Loss}
	In the general case of edgewise packet loss, packets are allowed to be lost according to a Markovian process on the edges. For our analysis, we consider the case where packets are lost with an independent and identically distributed probability $p_\ell$ on each edge; however, the analysis can be extended in a straightforward manner using any Markovian packet loss process desired. 
	First, we must define some additional quantities. Let  
	\begin{gather}
		E = \begin{bmatrix}
			\La_{r_1} & 0 & \dots & 0 \\
			0 & \La_{r_2} & \dots & 0 \\
			\vdots & \vdots & \ddots & \vdots \\
			0 & 0 & \dots & \La_{r_n}
		\end{bmatrix},\; \phi^k = \begin{bmatrix}
			r_1^k \\[.1cm]
			r_2^k \\
			\vdots \\
			r_n^k
		\end{bmatrix}.
	\end{gather}

	Then the state space equations for SH-SVL are
	\begin{gather}\label{eq:pw_ss}
		w_1^{k+1} = w_1^k - \zeta E \phi^k - \alpha u^k \\
		w_2^{k+1} = w_1^k + w_2^k - E \phi^k \\
		\phi_{(i,j)}^{k+1} = \begin{cases}
			\delta w_{1j}^{k+1} + \eta w_{2j}^{k+1} & \text{if } (i,j) \text{ success} \\
			\phi_{(i,j)}^k + \eta x_i^k & \text{if } (i,j) \text{ loss}
		\end{cases} \\
		x^k = w_1^k - E\phi^k \\
		u^k = \nabla F(x).
	\end{gather}
	
	To proceed with convergence analysis, we must project the unstable state $w_2^k$ onto the disagreement subspace. The state space equations with the projection then become 
	
	\begin{gather}\label{eq:pw_proj}
		w_1^{k+1} = w_1^k - \zeta E \hat{\phi}^k - \alpha u^k \\
		\hat{w}_2^{k+1} = (I-\Pi)w_1^k + (I-\Pi)\hat{w}_2^k - (I-\Pi)E \hat{\phi}^k \\
		\hat{\phi}_{(i,j)}^{k+1} = \begin{cases}
			\delta w_{1j}^{k+1} + \eta \hat{w}_{2j}^{k+1} & \text{if } (i,j) \text{ success} \\
			\phi_{(i,j)}^k + (I-\Pi)_{r_i} x^k & \text{if } (i,j) \text{ loss}
		\end{cases} \\
		x^k = w_1^k - E\hat{\phi}^k \\
		u^k = \nabla F(x).
	\end{gather}

	Now we show that the trajectories for $x^k$ are identical for the systems defined by both Equations~\eqref{eq:pw_ss} and \eqref{eq:pw_proj}, and that the fixed point for Equation~\eqref{eq:pw_proj} is the same as in Theorem 1.
	
	\begin{theorem}
		The trajectories for $w_1^k$ and $x^k$ are unchanged from the unprojected system \eqref{eq:pw_ss} to the projected system \eqref{eq:pw_proj}.
	\end{theorem}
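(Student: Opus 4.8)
The plan is to prove the statement by a single induction on the time index $k$ that runs the two systems in parallel and propagates three facts: (i) $w_1^k$ and $x^k$, hence also $u^k=\nabla F(x^k)$, are identical in \eqref{eq:pw_ss} and \eqref{eq:pw_proj}; (ii) $\hat w_2^k=(I-\Pi)w_2^k$; and (iii) for every agent $i$ the per-block memory discrepancy is a completely determined multiple of $\1_n$, namely $r_i^k-\hat r_i^k=\eta\,\bar w_2^k\,\1_n$, where $\bar w_2^k:=\tfrac1n\1_n^\tr w_2^k$ and $\hat r_i^k$ denotes the $i$-th block of $\hat\phi^k$. The base case follows from the shared initialization. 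The recurring device is that $E$ is block diagonal with the Laplacian rows $\La_{r_i}$ on its diagonal and $\La\1_n=0$, so any vector that is constant within each block lies in $\ker E$; consequently (iii) gives $E\phi^k=E\hat\phi^k$ at every step.

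Given the invariant at step $k$, I would first observe that the $w_1$-update and the output relation $x^k=w_1^k-E\phi^k$ of \eqref{eq:pw_ss} see the memory only through $E\phi^k=E\hat\phi^k$, so they coincide with the corresponding equations of \eqref{eq:pw_proj}; this yields (i) at step $k+1$. Applying $(I-\Pi)$ to the $w_2$-recursion and using $(I-\Pi)w_2^k=\hat w_2^k$ together with $(I-\Pi)E\phi^k=(I-\Pi)E\hat\phi^k$ reproduces verbatim the $\hat w_2$-recursion of \eqref{eq:pw_proj}, giving (ii). Finally, left-multiplying the $w_2$-recursion by $\tfrac1n\1_n^\tr$ and using $\tfrac1n\1_n^\tr E\phi^k=\bar w_1^k-\bar x^k$ produces the scalar identity $\bar w_2^{k+1}=\bar w_2^k+\bar x^k$, which the next step requires.

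The delicate part, and the one I expect to be the main obstacle, is re-establishing (iii) at step $k+1$ in spite of the edgewise, coordinate-by-coordinate nature of the memory update, since distinct coordinates of $r_i$ are refreshed on distinct steps and a priori the discrepancy need not remain constant within a block. For a coordinate $(i,j)$ that succeeds at step $k+1$, the two systems set it to $\delta w_{1j}^{k+1}+\eta w_{2j}^{k+1}$ and $\delta w_{1j}^{k+1}+\eta\hat w_{2j}^{k+1}$, whose difference is $\eta\bigl(w_{2j}^{k+1}-\hat w_{2j}^{k+1}\bigr)=\eta\bar w_2^{k+1}$, independent of $i$ and $j$. For a coordinate that is lost, the new discrepancy is the old one plus the mismatch between the two drift corrections; by the inductive (iii) the old discrepancy is $\eta\bar w_2^k$ and the mismatch is the consensus component $\eta\bar x^k$ of the raw drift $\eta x_i^k\1_n$ (this is exactly why the loss branch of \eqref{eq:pw_proj} carries a disagreement-projected correction in place of the raw term of \eqref{eq:pw_ss}), so the new discrepancy is $\eta\bar w_2^k+\eta\bar x^k=\eta\bar w_2^{k+1}$ by the scalar identity above. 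Both branches therefore land on the same multiple of $\1_n$, which closes the induction. As a corollary, because $x^\star$, $u^\star$, and $v^\star$ are functions of $w_1^\star$ and $E\phi^\star=E\hat\phi^\star$ alone, the fixed point of \eqref{eq:pw_proj} agrees with the one in Theorem 1, extending to the edgewise setting the observation already made after \eqref{eq:r_up_sync}.
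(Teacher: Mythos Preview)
Your proposal is correct and follows essentially the same route as the paper: an induction on $k$ carrying the three invariants (i) $w_1^k,x^k$ agree, (ii) $\hat w_2^k=(I-\Pi)w_2^k$, and (iii) $\phi_{(i,j)}^k-\hat\phi_{(i,j)}^k=\eta\bar w_2^k$ for every edge, together with the scalar identity $\bar w_2^{k+1}=\bar w_2^k+\bar x^k$ and the edgewise success/loss case split. The only point to tighten is the base case: ``shared initialization'' cannot mean literally identical initial data, since then (ii) and (iii) fail at $k=0$ unless $\bar w_2^0=0$; the paper handles this by noting that the projected system is a centralized analysis device and explicitly sets $\hat w_2^0=(I-\Pi)w_2^0$ and $\hat\phi_{(i,j)}^0=\phi_{(i,j)}^0-\eta\bar w_2^0$, which is exactly what your invariants require.
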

	\begin{proof}
		We will prove the theorem by induction. First assume that the initial conditions for $w_1^0$ are the same for the two systems. Furthermore, assume that 
		\begin{gather}
			\hat{w}_2^0 = (I-\Pi)w_2^0\\
			\hat{\phi}_{(i,j)}^0 = \phi_{(i,j)}^0 - \frac{\eta}{n}\1^\tr w_2^0.
		\end{gather}
		Since the projected system is inherently centralized, we can make these modifications to the initial conditions without loss of generality. It is clear from the initial conditions and the properties of the Laplacian that $E\phi^0 = E\hat{\phi}^0$. From the update equations we can see that $x^0$ and $w_1^1$ are the same for both systems and that $\hat{w}_2^1 = (I-\Pi)w_2^1$. Finally, looking at the update for $\hat{\phi}$, we can see that
		
		\begin{align*}
			\hat{\phi}^1_{(i,j)} &= \begin{cases}
				\delta w_{1j}^1 + \eta w_{2j}^1 - \frac{\eta}{n}\1^\tr w_2^1 & \text{if } (i,j) \text{ success} \\
				(\phi_{(i,j)}^0 + \eta x_i^0) - \frac{\eta}{n}\1^\tr (w_2^0 + x^0) & \text{if } (i,j) \text{ loss}
			\end{cases}\\
		&= \begin{cases}
			\delta w_{1j}^1 + \eta w_{2j}^1 - \frac{\eta}{n}\1^\tr w_2^1 & \text{if } (i,j) \text{ success} \\
			(\phi_{(i,j)}^0 + \eta x_i^0) - \frac{\eta}{n}\1^\tr w_2^1 & \text{if } (i,j) \text{ loss}
		\end{cases}\\
		&= \phi^1_{(i,j)} - \frac{\eta}{n}\1^\tr w_2^1.
		\end{align*}

		This completes the proof.
		
	\end{proof}
	
	Following directly from Theorem 1, the fixed point for the projected system is
	\begin{equation}
		\begin{aligned}
			x^\star &= \1 x_{\text{opt}}, & u^\star &= \nabla\n f(x^\star) \\
			w_1^\star &= x^\star + v^\star, & \hat{w}_2^\star &= \frac{\alpha}{\zeta \eta}\La^+(\delta \La - I) u^\star \\
			\hat{\phi}^\star_{(i,j)} &= \delta w_{1j}^\star + \eta \hat{w}_{2j}^\star.
		\end{aligned}
	\end{equation}

	In order for the system to be efficiently implemented, we must exploit the fact that the state $\hat{\phi}_{(i,j)}^k$ is always a success and absorb it into the state space equations. First, we define the modified quantities $\bar{E}$ and $\hat{\bar{\phi}}^k$ as follows

	\begin{gather}
		\bar{E} = \begin{bmatrix}
			\bar{\La}_{r_1} & 0 & \dots & 0 \\
			0 & \bar{\La}_{r_2} & \dots & 0 \\
			\vdots & \vdots & \ddots & \vdots \\
			0 & 0 & \dots & \bar{\La}_{r_n}
		\end{bmatrix},\; \hat{\bar{\phi}}^k = [\hat{\phi}_{(i,j)}^k]_{(i,j)\in \mathcal{E}},
	\end{gather}
	where $\bar{E} \in \mathbb{R}^{n,m}$, $\hat{\bar{\phi}}^k \in \mathbb{R}^m$, and $\bar{\La}_i$ is a row vector whose elements are $\La_{ij}$ if $(i,j) \in \mathcal{E}$. Note that the trajectories of the system have not been modified but the state space is now of size $2n+m$.
	
	
	Consider the collections 
	\begin{align}
		\omega^k &= \{\tau:\tau \in \mathcal{E} \text{ and } \tau \text{ successful at time } k\} \\
		\Omega &= \{\omega^k : k \in \mathbb{Z}_{\geq 0}\}.
	\end{align}
	Note that $|\Omega| = 2^{m}.$ Then we have that
	\begin{equation}
		\mathbb{P}(\omega^k = \omega) = p_\omega = (1-p_\ell)^{|\omega|}p_\ell^{m-|\omega|}.
	\end{equation}
	
	Now we can define the state matrices as follows:
	
	\begin{align}
		A_1 &= \begin{bmatrix}
			I - \zeta \delta D(\La) & -\zeta \eta D(\La) & -\zeta E \end{bmatrix}\\
		A_2 &= (I-\Pi)\begin{bmatrix}
			(I-\delta D(\La)) & (I-\eta D(\La)) & -E\end{bmatrix}\\
		B_1 &= -\alpha I \\
		C &= \begin{bmatrix}
			(I-\delta D(\La)) & -\eta D(\La) & -E
		\end{bmatrix}.
	\end{align}
	
	If $\tau = (i,j) \in \omega^k$ (i.e., edge $\tau$ is successful):
	\begin{align}
		A_{\omega_k\phi_\tau 1} &= e_j^\tr [\delta (I - \zeta \delta D(\La))+\eta(I-\Pi)(I-\delta D(\La))]\\
		A_{\omega_k\phi_\tau 2} &= e_j^\tr [\eta(I-\Pi)(I-\eta D(\La)-\delta \zeta \eta D(\La)]\\
		A_{\omega_k\phi_\tau \phi} &= e_j^\tr [-(\delta \zeta I + \eta(I-\Pi))E] \\
		B_{\omega_k\phi_\tau} &= -\alpha \delta e_j^\tr.
	\end{align}
	
	If $\tau = (i,j) \in \mathcal{E}\setminus\omega^k$ (i.e., edge $\tau$ is unsuccessful):
	\begin{align}
		A_{\omega_k\phi_\tau 1} &= e_i^\tr \eta(I-\Pi)(I+\delta D(\La)) \\
		A_{\omega_k\phi_\tau 2} &= e_i^\tr \eta^2(I-\Pi)D(\La) \\
		A_{\omega_k\phi_\tau \phi} &= e_\tau^\tr + e_i^\tr \eta(I-\Pi)E \\
		B_{\omega_k\phi_\tau} &= 0.
	\end{align}
 	Finally, 
	\begin{align}
		A_{\omega_k\phi} &= \begin{bmatrix}
			A_{\omega_k\phi_\tau 1} & A_{\omega_k\phi_\tau 2} & A_{\omega_k\phi_\tau \phi}
		\end{bmatrix}_{\tau\in \mathcal{E}} \\
		B_{\omega_k\phi} &= [B_{\omega_k\phi_{\tau}}]_{\tau\in \mathcal{E}}
	\end{align}
	
	\begin{equation}\label{eq:gpw}
		G_{\omega_k} = \left[ \begin{array}{c|c}
			A_1 & B_1 \\
			A_2 & 0 \\
			A_{\omega_k\phi} & B_{\omega_k\phi} \\ 
			\hline
			C & 0
			\end{array}\right].
	\end{equation}

	\begin{theorem}
		If packets are lost edgewise with an i.i.d. probability $p_\ell$ and there exists $P \in \R^{2n+m\times 2n+m}$, $\lambda_0\in \R$, and $\rho \in (0,1)$, with $P\succ 0$ and $\lambda_0\geq 0$ such that
		\begin{gather}
			\label{eq:LMI_ew1}
			\sum_{\omega\in\Omega} p_\omega[\star]^\tr
			\left[ \begin{array}{cc|c}
				P & 0 & 0\\
				0 & -\rho^2P & 0\\
				\hline
				0 & 0 & \lambda_0M_0
			\end{array}\right]
			\left[ \begin{array}{cc}
				A_{\omega} & B_{\omega} \\
				I & 0 \\
				\hline
				C & 0 \\
				0 & I
			\end{array}\right] \preceq 0,
		\end{gather}
		then the following is true for the trajectories of $G_{\omega_k}$:
		\begin{equation}\label{eq:main_ew}
			\mathbb{E} \Bigg[\Bigg \| \begin{bmatrix}
				w_1^k - w_1^\star\\[.1cm] \hat{w}_2^k - \hat{w}_2^\star \\[.1cm] \hat{\bar{\phi}}^k - \hat{\bar{\phi}}^\star
			\end{bmatrix} \Bigg \|^2\Bigg] \leq \operatorname{cond}(P)\rho^{2k} \Bigg \| \begin{bmatrix}
				w_1^0 - w_1^\star\\[.1cm] \hat{w}_2^0 - \hat{w}_2^\star\\[.1cm] \hat{\bar{\phi}}^0 - \hat{\bar{\phi}}^\star
			\end{bmatrix} \Bigg \|^2
		\end{equation}
		for a fixed point $(w_1^\star,\hat{w}_2^\star,\hat{\bar{\phi}}^\star, x^\star,u^\star)$. Thus the output $x^k$ of SH-SVL converges almost surely to the optimizer with the linear rate $\rho$.
	\end{theorem}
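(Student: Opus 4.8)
The plan is to reproduce the IQC Lyapunov telescoping argument used for Theorem~2 (cf.\ Theorem~4 of \cite{lesrecpac16}), inserting conditional expectations over the random edge-success pattern exactly as in the proof of Theorem~3 (see also Theorem~1 of \cite{huseiran17}). Introduce the stacked error $\xi^k = [\,(w_1^k - w_1^\star)^\tr\;\; (\hat{w}_2^k - \hat{w}_2^\star)^\tr\;\; (\hat{\bar{\phi}}^k - \hat{\bar{\phi}}^\star)^\tr\,]^\tr \in \R^{2n+m}$ and $\tilde{u}^k = \nabla\n F(x^k) - \nabla\n F(x^\star)$. The preliminary task is bookkeeping: verify that, for each realization $\omega\in\Omega$, subtracting the fixed point displayed just before \eqref{eq:gpw} (valid by Theorem~1, with Theorem~4 guaranteeing that the projected system reproduces the algorithm's $x^k$ and $w_1^k$ trajectories, and with the always-received self-edges $r_{ii}$ absorbed into the $D(\La)$ blocks of $A_1,A_2,C$) from the compressed state-space form \eqref{eq:gpw} leaves the homogeneous recursion $\xi^{k+1} = A_\omega\xi^k + B_\omega\tilde{u}^k$ with output $\tilde{x}^k = C\xi^k$, where $A_\omega$ and $B_\omega$ stack $A_1,A_2,A_{\omega_k\phi}$ and $B_1,B_{\omega_k\phi}$. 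The affine offsets cancel because each branch of the $w_1$-, $\hat{w}_2$- and $\hat{\bar{\phi}}$-updates is affine and the fixed point satisfies all $2^m$ branches by construction; in particular the linearly growing components carried by $\hat{\bar{\phi}}$ on persistently lost edges are annihilated inside $\bar{E}\hat{\bar{\phi}}$ by the row structure of $\bar{E}$, mirroring how the consensus-direction growth of $w_2$ is killed by $(I-\Pi)$ and by $\La$ in Theorem~2.

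With the dynamics in this form, right-multiply \eqref{eq:LMI_ew1} by $[\,(\xi^k)^\tr\;\;(\tilde{u}^k)^\tr\,]^\tr$ and left-multiply by its transpose. On the event $\{\omega^k = \omega\}$ one has $\xi^{k+1} = A_\omega\xi^k + B_\omega\tilde{u}^k$ and $\tilde{x}^k = C\xi^k$, so the $\omega$-th summand equals $p_\omega\big[(\xi^{k+1})^\tr P\xi^{k+1} - \rho^2(\xi^k)^\tr P\xi^k + \lambda_0[\star]^\tr(M_0\otimes I_n)[\tilde{x}^k;\tilde{u}^k]\big]$; since the last two terms do not depend on $\omega$ and $\sum_{\omega\in\Omega}p_\omega = 1$, summing gives
\[
\mathbb{E}\big[(\xi^{k+1})^\tr P\,\xi^{k+1}\mid \mathcal{F}_k\big] - \rho^2(\xi^k)^\tr P\,\xi^k + \lambda_0[\star]^\tr(M_0\otimes I_n)\begin{bmatrix}\tilde{x}^k\\\tilde{u}^k\end{bmatrix} \le 0,
\]
where $\mathcal{F}_k$ is the $\sigma$-field generated by the loss patterns before time $k$; this identification is legitimate because $\xi^k,\tilde{x}^k,\tilde{u}^k$ are $\mathcal{F}_k$-measurable while $\omega^k$ is independent of $\mathcal{F}_k$. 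By Assumption~\ref{a:1} the $M_0$ term is nonnegative along trajectories and $\lambda_0\ge 0$, so it can be discarded, yielding $\mathbb{E}[(\xi^{k+1})^\tr P\xi^{k+1}\mid\mathcal{F}_k] \le \rho^2(\xi^k)^\tr P\xi^k$.

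Taking total expectations and iterating from the deterministic $\xi^0$ gives $\mathbb{E}[(\xi^k)^\tr P\xi^k] \le \rho^{2k}(\xi^0)^\tr P\xi^0$; bounding the left side below by $\lambda_{\min}(P)\,\mathbb{E}\|\xi^k\|^2$ and the right side above by $\lambda_{\max}(P)\|\xi^0\|^2$ yields \eqref{eq:main_ew} with the constant $\operatorname{cond}(P)$, and since $\tilde{x}^k = C\xi^k$ this is mean-square convergence of $x^k$ to $x_{\text{opt}}$ at linear rate $\rho$. For almost sure convergence, sum \eqref{eq:main_ew} over $k\ge 0$: the right side is a convergent geometric series, so $\sum_k\mathbb{E}\|\xi^k\|^2 < \infty$; by Tonelli $\mathbb{E}\big[\sum_k\|\xi^k\|^2\big] < \infty$, hence $\sum_k\|\xi^k\|^2 < \infty$ almost surely and $\|\xi^k\|\to 0$ almost surely (equivalently, bound $\mathbb{P}(\|\xi^k\|>\epsilon)$ by Markov's inequality and invoke Borel--Cantelli, as in Theorem~3).

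The one genuinely delicate step is the reduction in the first paragraph: confirming that the $2^m$ per-edge success/loss branches of \eqref{eq:pw_proj} are faithfully encoded by $A_1,A_2,A_{\omega_k\phi},B_1,B_{\omega_k\phi},C$, and that the stated fixed point cancels every affine offset---including the linearly growing directions on lost edges---so that the error recursion is truly homogeneous and admits the contracting Lyapunov certificate $P$. Everything after that is the telescoping argument of Theorem~2 with the stochastic modification of Theorem~3, which is routine.
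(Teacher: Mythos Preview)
Your proposal is correct and follows essentially the same approach as the paper, which dispatches the proof in a single line (``The result follows directly from Theorem~3''); you have simply unpacked that reference by redefining $\xi^k$ to include $\hat{\bar{\phi}}^k$, carrying out the conditional-expectation telescoping of Theorem~2/Theorem~3, and invoking Markov plus Borel--Cantelli for almost sure convergence. Your extra paragraph on the bookkeeping of the error recursion is more explicit than anything in the paper, though the remark about ``linearly growing components carried by $\hat{\bar{\phi}}$ on persistently lost edges'' is slightly off---in the projected system \eqref{eq:pw_proj} the lost-edge increment is $(I-\Pi)_{r_i}x^k$, which vanishes at the fixed point, so no growth survives the projection and the fixed point satisfies every branch directly.
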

	\begin{proof}
		The result follows directly from Theorem 3.
	\end{proof}

	\section{Numerical Results}
	The linear matrix inequalities provided in Theorems 2, 3, and 5 must be solved numerically in order to certify a worst-case convergence rate $\rho$. To do so, the associated semidefinite program was implemented in the Julia programming language using JuMP and solved with MOSEK \cite{mosek}. Given a set of parameters, a bisection search was used to find $\rho$. The Nelder-Mead algorithm (from the Optim library \cite{optim}) was used to find algorithm parameters ($\alpha, \delta,\zeta,\eta$) that minimized $\rho$. We found that the Nelder-Mead algorithm needed to be initialized with parameters that certified a converging algorithm to find useful parameters and that it could effectively be warm started by providing near-optimal parameters. The worst-case convergence rate of our algorithm is subject to the same lower bound, $\rho\geq \max ((\kappa-1)/(\kappa+1),\sigma) $, found in \cite{sunvanles20}.

    \subsection{Convergence Rates}
	Figure \ref{fig:sync_loss} and \ref{fig:sync_loss20} show the performance of SH-SVL in the synchronous loss setting over a range of graph connectivity and for multiple rates of packet loss, including the case with no packet loss where Algorithm 1 and 2 are equivalent, for condition ratios of $\kappa = 10$ and $\kappa=20$ respectively. The figures also include the performance of SVL and the theoretical lower bound \cite{sunvanles20} on performance for reference. Notably, in the lossless setting with parameters optimized for a given $\kappa$ and $\sigma$, SH-SVL marginally outperforms the SVL algorithm in the worst case. This could be due to the fact that the SVL algorithm does not optimize for $\rho$ directly. Also of note is that when the parameters are optimized for the specific level of packet loss, SH-SVL converges even in the presence of extraordinarily high packet loss on graphs with very low connectivity (the value of $\sigma$ is high).
	
	\begin{figure}
		\includegraphics[width=0.5\textwidth]{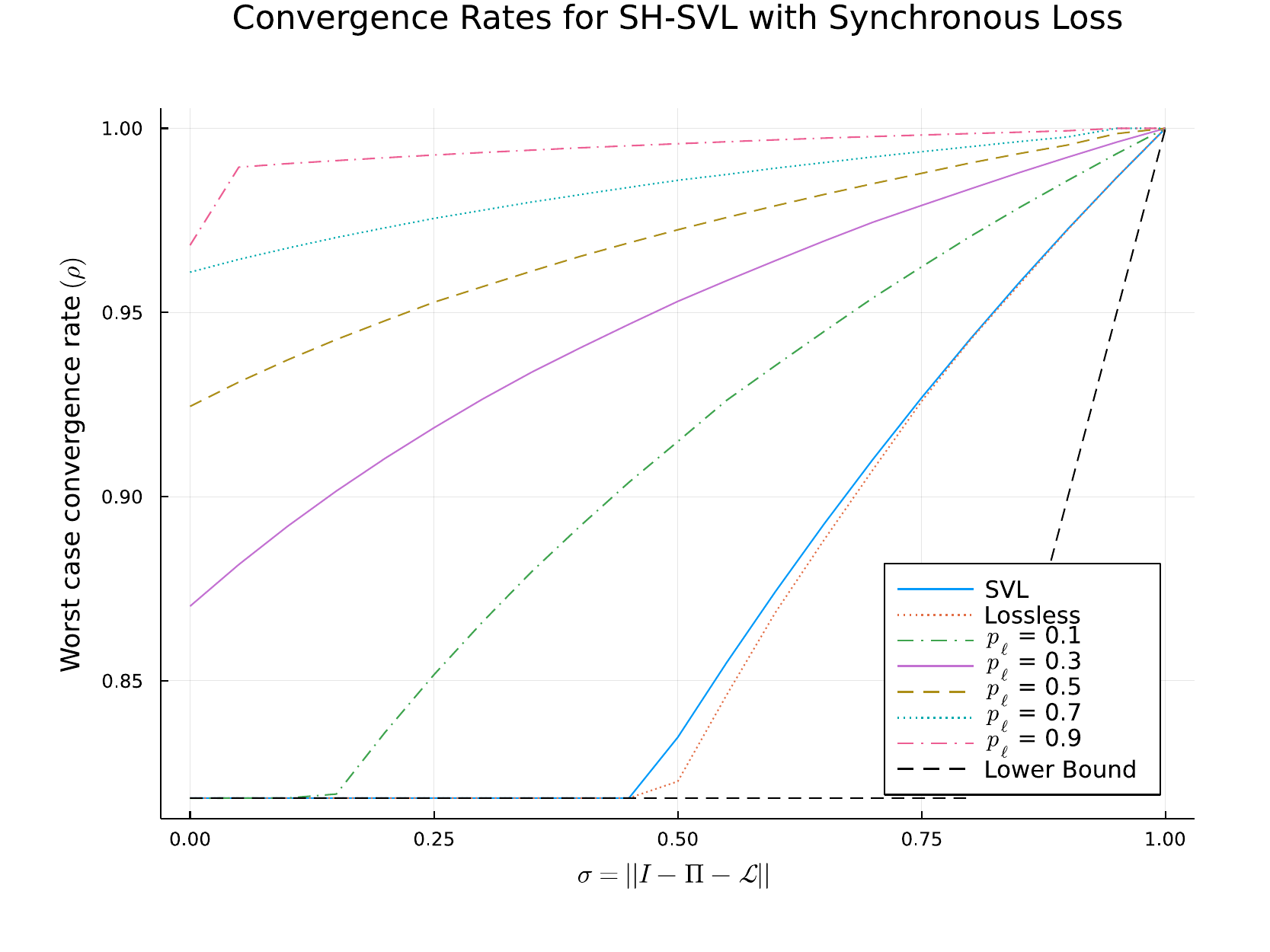}
		\caption{Performance of SH-SVL in the synchronous packet loss setting for a range of independent packet loss chances. SVL included for reference. The objective function condition ratio is $\kappa = 10$.}
		\label{fig:sync_loss}
	\end{figure}
	
	\begin{figure}
		\includegraphics[width=0.5\textwidth]{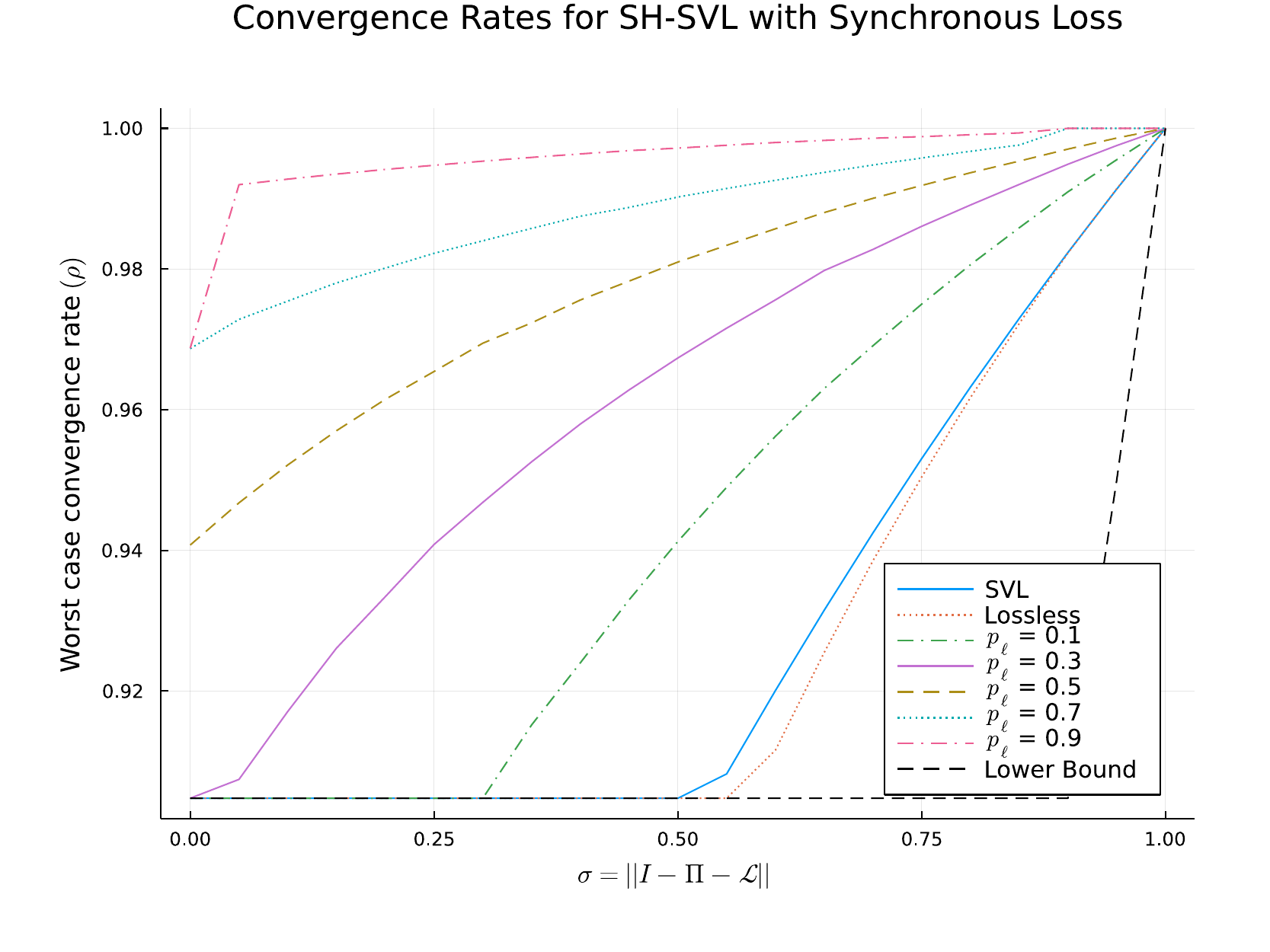}
		\caption{Performance of SH-SVL in the synchronous packet loss setting for a range of independent packet loss chances. SVL included for reference. The objective function condition ratio is $\kappa = 20$.}
		\label{fig:sync_loss20}
	\end{figure}
	
	However, if the rate of packet loss is higher than the parameters are optimized for, the performance of SH-SVL can be very fragile. Figure \ref{fig:1.0_loss} depicts the results when the optimal parameters for the lossless case are used in the lossy case. Interestingly, if the packet loss rate is lower than that which the parameters are optimized for, the worst case performance remains the same. Figure \ref{fig:0.5_loss} depicts the results when optimal parameters for $p_\ell=0.5$ are used. This would imply that system designers could decide the tradeoff between typical operating convergence speeds and the maximum allowable packet loss in the system.
	
	\begin{figure}
		\includegraphics[width=0.5\textwidth]{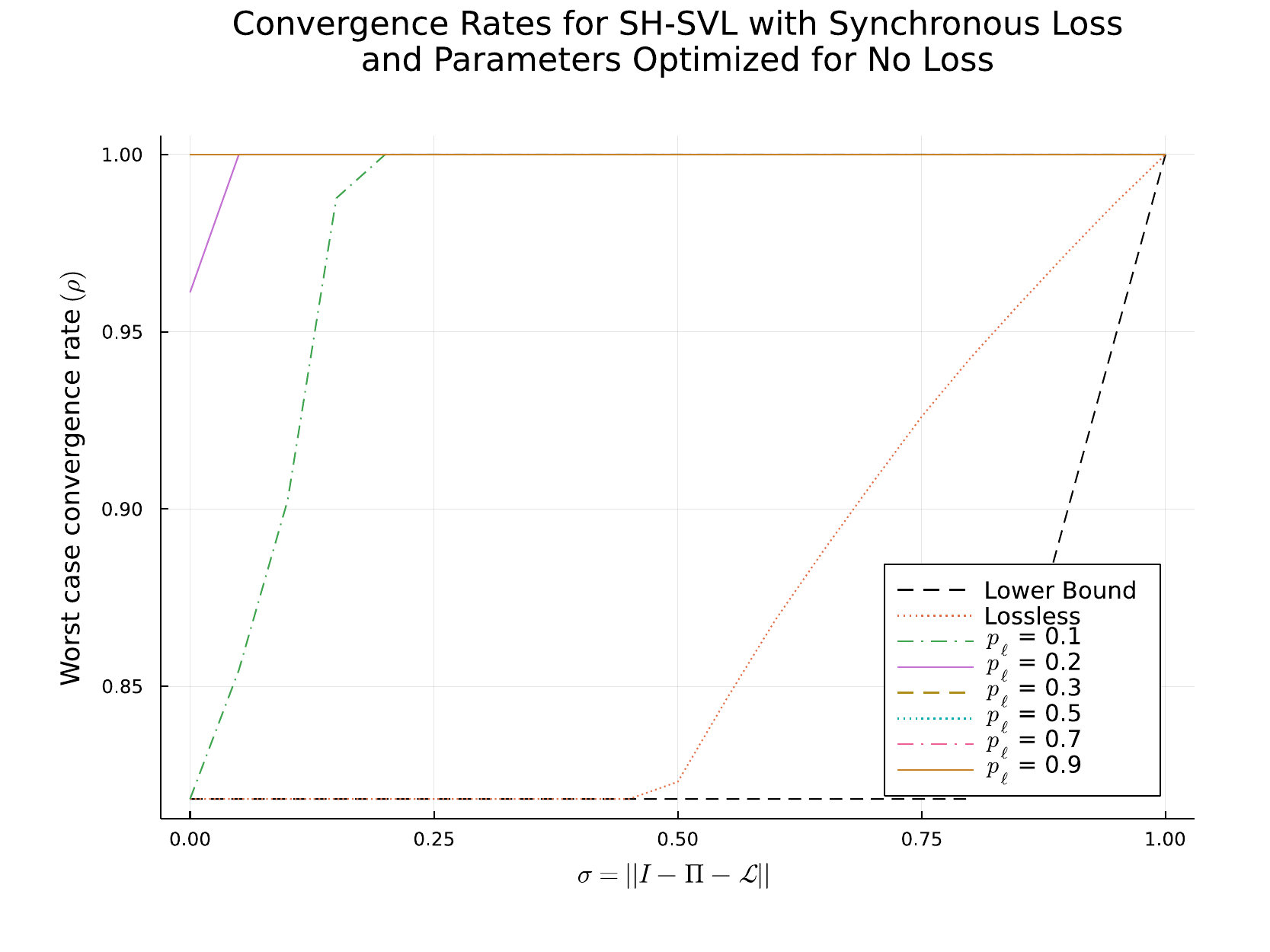}
		\caption{Performance of SH-SVL in the synchronous packet loss setting for a range of independent packet loss chances. Parameters are optimized for $\kappa=10$ and given $\sigma$ value but no packet loss.}
		\label{fig:1.0_loss}
	\end{figure}
	
	\begin{figure}
		\includegraphics[width=0.5\textwidth]{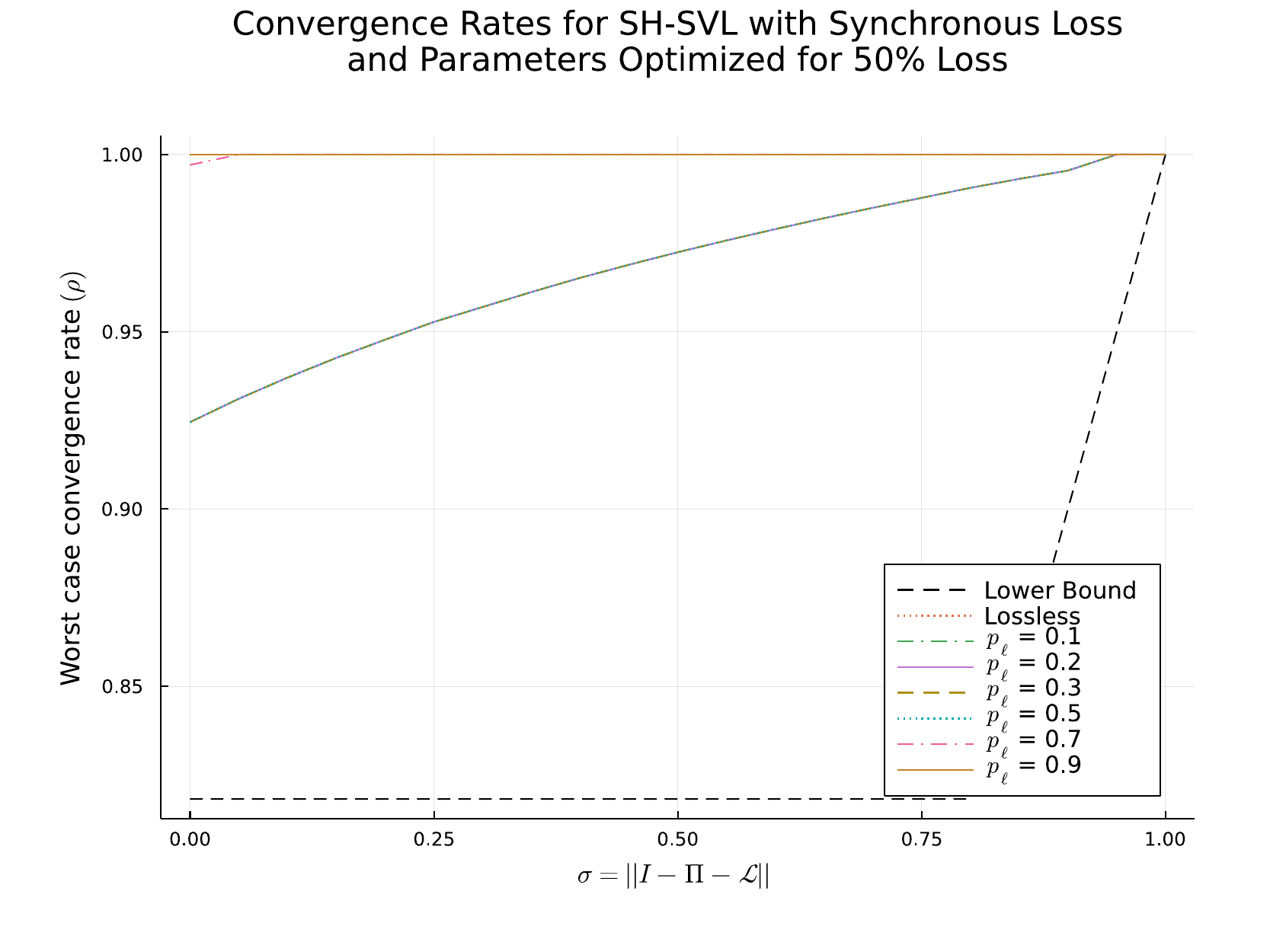}
		\caption{Performance of SH-SVL in the synchronous packet loss setting for a range of independent packet loss chances. Parameters are optimized for $\kappa=10$, $p_\ell = 0.5$, and the given $\sigma$ value.}
		\label{fig:0.5_loss}
	\end{figure}

	Figure \ref{fig:ew_loss} shows the performance of SH-SVL in both the edgewise and synchronous loss settings over the range of packet loss probabilities and for different sizes of directed cycles. The theoretical lower bound is included for reference. Directed cycles were chosen as the graph topology to demonstrate the performance of SH-SVL because they have the minimum number of edges for the graph to be connected, which makes the size of the semidefinite program smaller and also gives them a very low connectivity which makes them serve as a rough lower bound on the performance for a given graph size. Only sizes up to $n=5$ are shown due to the fact that finding optimal parameters using Nelder-Mead for such large problems starts to take a prohibitively long time. The 3-, 4-, and 5-node directed cycles have edge weights of $0.9$ and $\sigma =  [0.854, 0.906, 0.936]$ respectively. From Figures~\ref{fig:sync_loss} and \ref{fig:ew_loss}, it is clear that the LMI from Theorem 6 yields a faster convergence rate than the LMIs from Theorem 3 for the same values of $\kappa$ and $p_\ell$. This is entirely due to the fact that the LMIs in Theorem 6 are less conservative because the Laplacian is absorbed into the linear system whereas in Theorem 3, the Laplacian is parameterized. Similar LMIs to Theorem 6 can be derived for the synchronous loss case (results plotted as dashed lines in Figure \ref{fig:ew_loss}); the results suggest that the synchronous loss case is a good upper bound on performance for the edgewise loss setting.
	
	\begin{figure}[h]
		\includegraphics[width=0.5\textwidth]{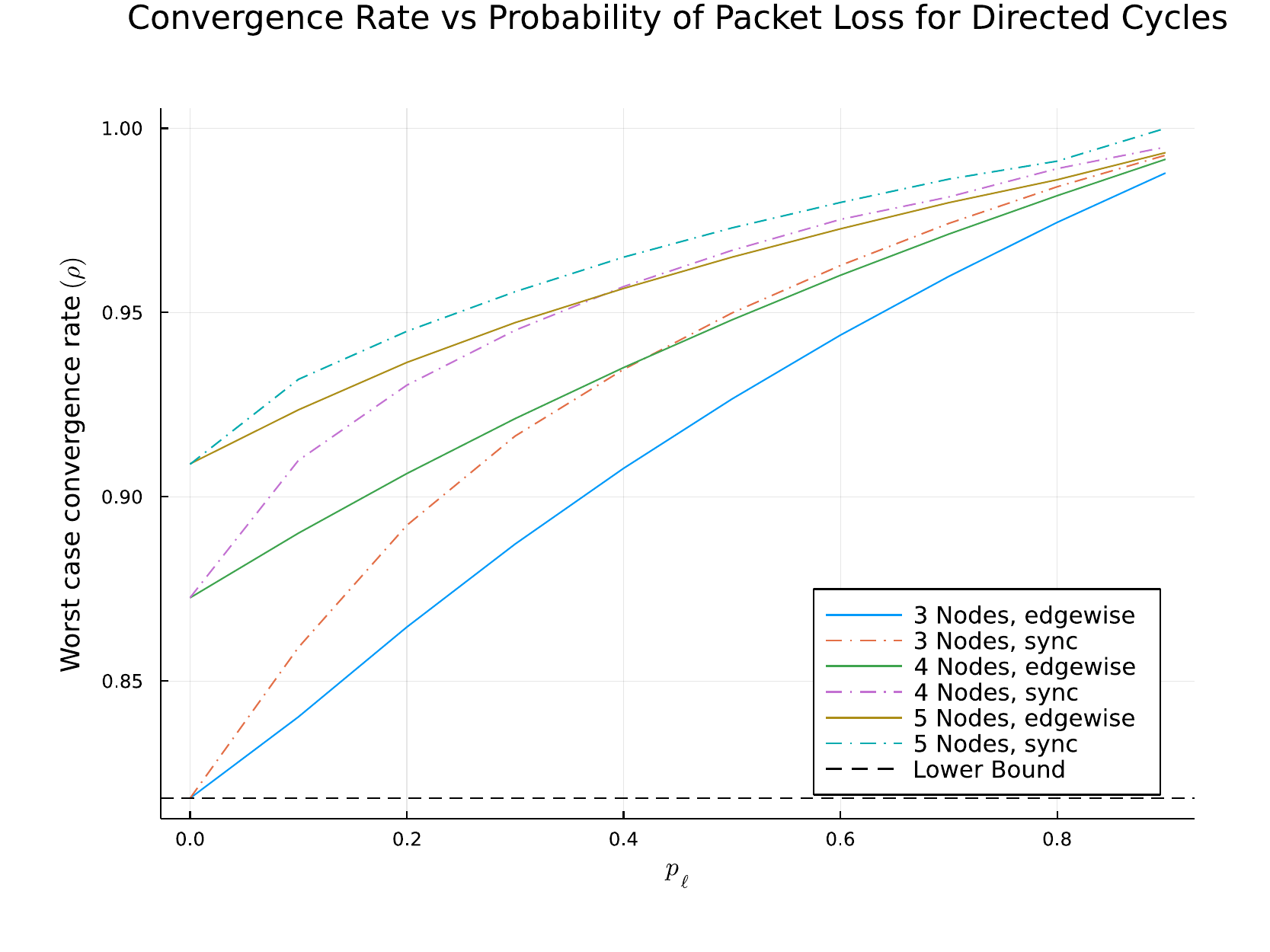}
		\caption{Performance of SH-SVL in the edgewise and synchronous packet loss setting on directed cycles with the Laplacian absorbed into the linear dynamics. Edgewise loss resultst are solid lines while synchronous loss results are in dashed lines. $\kappa=10$.}
		\label{fig:ew_loss}
	\end{figure}
	
	\subsection{Classification example}
	
	To test the performance of our algorithm under packet loss, we solved a classification problem using the COSMO chip dataset \cite{cosmo}. The problem setup is as follows: a network of $n$ agents would like to collaboratively compute a binary classifier that identifies whether or not a computer chip will pass a quality assurance test using data gathered independently by each agent in the network; furthermore, they would like to do so in a distributed fashion without sharing their datasets. In order to simulate this problem, we divide the COSMO chip dataset into $n$ local subsets and denote agent $i$'s set of local data indices as $S_i$. To improve the performance of their classifier, the agents employ a polynomial embedding where each 2-dimensional data point $d_j = [d_{j1}, d_{j2}]$ is embedded in a 28-dimensional space given by \[M(d_j) = [1, d_{j1}, d_{j2}, d_{j1}^2, d_{j1} d_{j2}, d_{j2}^2, d_{j1}^3, \dots, d_{j1}d_{j2}^5, d_{j2}^6].\]
	The agents use the logistic loss function with $L_2$-regularization, yielding local cost functions given by \begin{equation}\label{eq:loss}
		f_i(x_i) = \sum_{j\in S_i} \log(1+e^{-l_jx_i^\tr M(d_j)}) + \frac{1}{n}||x_i||^2,
	\end{equation}
	where $l_j\in \{-1,1\}$ is the label of data point $j$, and each agent computes the label of unseen data by using the operation $l = \operatorname{sgn}(x_i^\tr M(d))$. Using the cost in \eqref{eq:loss}, the corresponding sector bound $(\mu,L)$ is approximated as $\mu = \frac{2}{n}$ and
	\begin{gather}
		L_i \leq \Big\|\frac{2}{n}I + \frac{1}{4}M_i^\tr M_i\Big\|, \; \;
		L = \max_i L_i,
	\end{gather}
	where the rows of $M_i$ are $M(d_j)$ for $j\in S_i$.
	The agents' connection topology is described by an $n=7$ node directed ring lattice, shown in Figure \ref{fig:network}, such that $(i,j)\in \mathcal{E}$ when $j \in \{i+1,i+3,i+5\}\mod n$. All edge weights in the graph are set to 1/4 and $\sigma = ||I-\Pi-\La|| = 0.562$.
	
	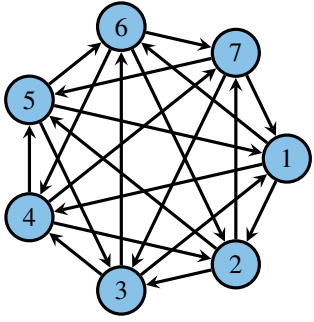
\begin{figure}
		\centering
		\begin{tikzpicture}
			\node (0) {};
			\node (1) at ($(0)+(0:1.8)$) [vertex] {$1$};
			\node (2) at ($(0)+(-51.43:1.8)$) [vertex] {$2$};
			\node (3) at ($(0)+(-102.86:1.8)$) [vertex] {$3$};
			\node (4) at ($(0)+(-154.29:1.8)$) [vertex] {$4$};
			\node (5) at ($(0)+(-205.71:1.8)$) [vertex] {$5$};
			\node (6) at ($(0)+(-257.14:1.8)$) [vertex] {$6$};
			\node (7) at ($(0)+(-308.57:1.8)$) [vertex] {$7$};

			\draw [link] (1) -- (2);
			\draw [link] (1) -- (4);
			\draw [link] (1) -- (6);
			\draw [link] (2) -- (3);
			\draw [link] (2) -- (5);
			\draw [link] (2) -- (7);
			\draw [link] (3) -- (4);
			\draw [link] (3) -- (6);
			\draw [link] (3) -- (1);
			\draw [link] (4) -- (5);
			\draw [link] (4) -- (7);
			\draw [link] (4) -- (2);
			\draw [link] (5) -- (6);
			\draw [link] (5) -- (1);
			\draw [link] (5) -- (3);
			\draw [link] (6) -- (7);
			\draw [link] (6) -- (2);
			\draw [link] (6) -- (4);
			\draw [link] (7) -- (1);
			\draw [link] (7) -- (3);
			\draw [link] (7) -- (5);
		\end{tikzpicture}
		\caption{The directed network topology for the classification example. All edge weights are 1/4.}
		\label{fig:network}
	\end{figure}
	
	Using $(\mu,L)$ and $\sigma$, we computed the optimal parameters for our algorithm using Nelder-Mead assuming no packet loss and computed the SVL parameters as detailed in \cite{sunvanles20}. We then simulated both SH-SVL and SVL with and without packet loss and took the maximum error between the distributed algorithms and a centralized solution found using Convex.jl and MOSEK. We ran SH-SVL using random initial conditions on the interval $[0,1]$ and the SVL algorithm using zero initial conditions. For the packet loss run of SVL,
	we held the previous message on each edge
	so that the fixed points would be unaffected. At each time step, packets had a 30\% chance of being lost, independent of each other and time. The results of these simulations are shown in Figure \ref{fig:packet_loss}. In this scenario, SH-SVL has similar performance as SVL and SH-SVL with lossy channels still converges to the optimum at a similar rate as SH-SVL with lossless channels, despite the high rate of packet loss that causes SVL to converge with high error. The worst-case convergence rate for both SH-SVL and SVL is $\rho = 0.9544$ and the actual rates in this scenario are $0.9514$ and $0.9455$ respectively. With a packet-loss rate of $30\%$ and parameters optimized for lossless channels, the worst-case convergence rate for SH-SVL is $\rho=1$; however, the actual rate from Figure \ref{fig:packet_loss} is $0.9632$, demonstrating the fact that the worst-case results for packet loss can be very conservative and that the optimal parameters may not be as fragile as Figure \ref{fig:1.0_loss} suggests.
	\begin{figure}[h]
		\includegraphics[width=0.5\textwidth]{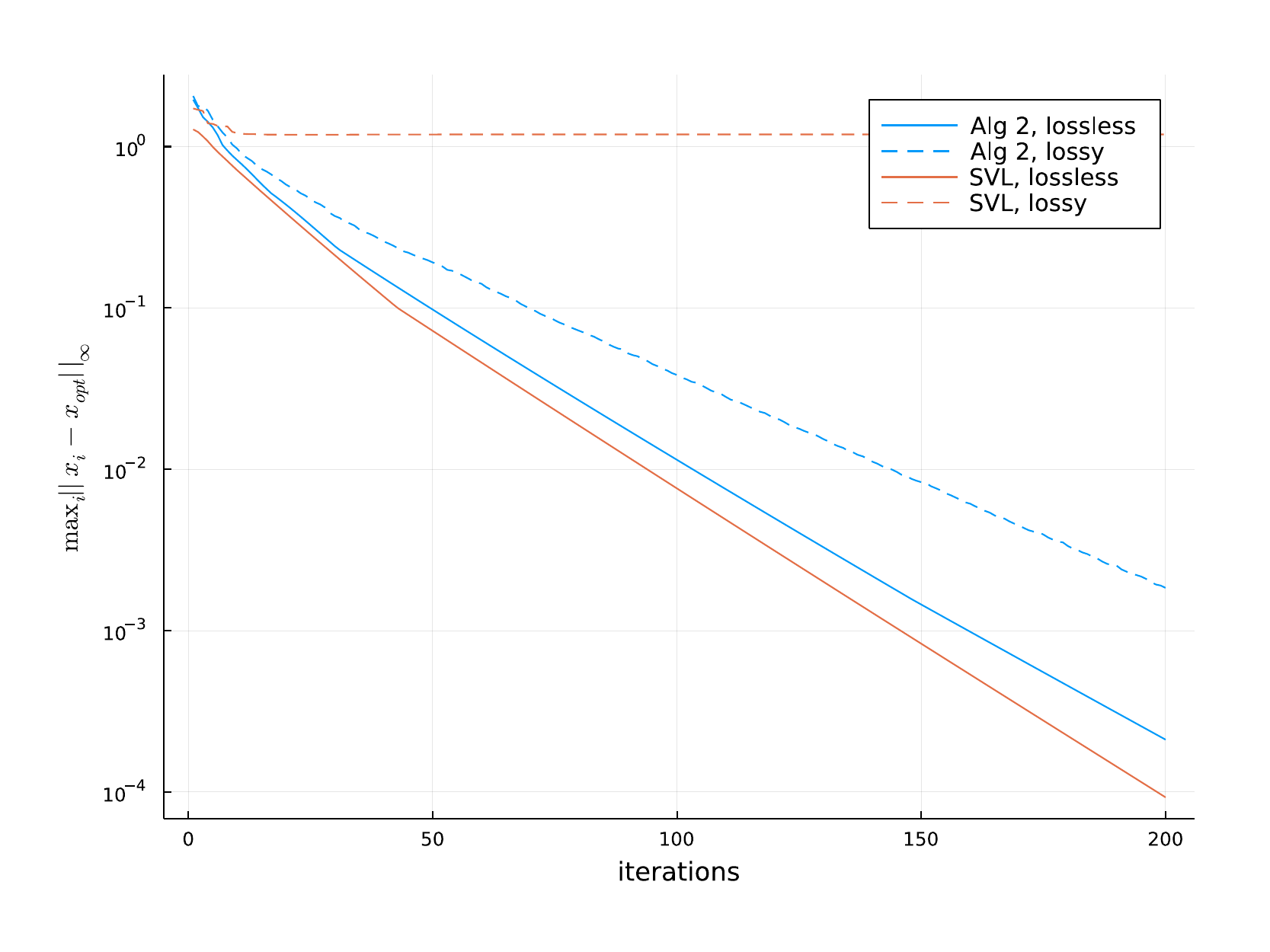}
		\caption{Simulation of SH-SVL and SVL in lossless as well as lossy channels. The lossy channels are modeled with an independent 30\% packet loss. Error is the maximum error.}
		\label{fig:packet_loss}
	\end{figure}
	\section{Summary and Future Work}
	In this paper, we extended the analysis of our of our self-healing algorithms to the case where packets are lost by constructing linear matrix inequalities for both the case of synchronous packet loss and general edgewise packet loss. We performed a numerical study which certified that SH-SVL will converge even in the case of extremely high packet loss on networks with poor connectivity. 
	
	
	In the future, we plan to extend our synthesis and analysis techniques to the more general cases of asynchronous updates, nonsmooth objectives, and nonconvex objectives.
	

	
	\renewcommand*{\bibfont}{\footnotesize}
	\printbibliography

@String{siamopt = "{{SIAM} J.\ Optim.}"}

@Book{haddomcha18,
	author = {Christoforos N. Hadjicostis and Alejandro D. {Dom\'inguez-Garc\'ia} and Themistoklis Charalambous},
	title = {Distributed Averaging and Balancing in Network Systems},
	year = {2018},
	volume = {13},
	series = {Foundations and Trends (R) in Systems and Control},
	publisher = {Now Publishers}
}

@Article{kiavancorfrelynmar19,
	author = {Solmaz S. Kia and Bryan {Van Scoy} and Jorge Cort\'es and Randy A. Freeman and Kevin M. Lynch and Sonia Mart\'inez},
	title = {Tutorial on Dynamic Average Consensus: The Problem, its Applications, and the Algorithms},
	journaltitle = {{IEEE} Control Systems Magazine},
	year = {2019},
	volume = {39},
	number = {3},
	month = {6},
	pages = {40--72}
}

@Article{lesrecpac16,
	author = {Laurent Lessard and Benjamin Recht and Andrew Packard},
	title = {Analysis and Design of Optimization Algorithms via Integral Quadratic Constraints},
	journaltitle = siamopt,
	year = {2016},
	volume = {26},
	number = {1},
	pages = {57--95},
	owner = {randy},
	timestamp = {2017.04.04}
}

@Manual{mosek,
	author = {{MOSEK ApS}},
	title = {{MOSEK} {O}ptimizer {API} for {C} 8.0.0.81},
	year = {2017},
	url = {http://docs.mosek.com/8.0/capi/index.html},
	owner = {randy},
	timestamp = {2017.06.29}
}

@Inproceedings{sunhules17,
	
	author={A. {Sundararajan} and B. {Hu} and L. {Lessard}},
	
	booktitle={2017 55th Annual Allerton Conference on Communication, Control, and Computing (Allerton)}, 
	
	title={Robust convergence analysis of distributed optimization algorithms}, 
	
	year={2017},
	
	volume={},
	
	number={},
	
	pages={1206-1212},
	
	doi={10.1109/ALLERTON.2017.8262874}}

@Inproceedings{sunvanles19,  author={A. {Sundararajan} and B. {Van Scoy} and L. {Lessard}},  booktitle={2019 American Control Conference (ACC)},   title={A Canonical Form for First-Order Distributed Optimization Algorithms},   year={2019},  volume={},  number={},  pages={4075-4080},  doi={10.23919/ACC.2019.8814838}}

@Article{sunvanles20,  author={A. {Sundararajan} and B. {Van Scoy} and L. {Lessard}},  journal={IEEE Transactions on Control of Network Systems},   title={Analysis and Design of First-Order Distributed Optimization Algorithms Over Time-Varying Graphs},   year={2020},  volume={7},  number={4},  pages={1597-1608},  doi={10.1109/TCNS.2020.2988009}}

@ARTICLE{ridfrelyn20,  author={I. L. {Donato Ridgley} and R. A. {Freeman} and K. M. {Lynch}},  journal={IEEE Control Systems Letters},   title={Private and Hot-Pluggable Distributed Averaging},   year={2020},  volume={4},  number={4},  pages={988-993},  doi={10.1109/LCSYS.2020.2996957}}

@article{optim,
	author  = {Mogensen, Patrick Kofod and Riseth, Asbj{\o}rn Nilsen},
	title   = {Optim: A mathematical optimization package for {Julia}},
	journal = {Journal of Open Source Software},
	year    = {2018},
	volume  = {3},
	number  = {24},
	pages   = {615},
	doi     = {10.21105/joss.00615}
}

@InProceedings{cosmo,
  author        = {Michael Garstka and Mark Cannon and Paul Goulart},
  title         = {{COSMO}: A conic operator splitting method for large convex problems},
  booktitle     = {European Control Conference},
  year          = {2019},
  location      = {Naples, Italy},
  doi            = {10.23919/ECC.2019.8796161}
}

@article{rig08,
author={Rigatos,Gerasimos G.},
year={2008},
month={05},
title={Distributed gradient and particle swarm optimization for multi-robot motion planning},
journal={Robotica},
volume={26},
number={3},
pages={357-370},
note={Copyright - Cambridge University Press; Last updated - 2015-08-15},
abstract={SUMMARY Two distributed stochastic search algorithms are proposed for motion planning of multi-robot systems: (i) distributed gradient, (ii) swarm intelligence theory. Distributed gradient consists of multiple stochastic search algorithms that start from different points in the solutions space and interact with each other while moving toward the goal position. Swarm intelligence theory is a derivative-free approach to the problem of multi-robot cooperation which works by searching iteratively in regions defined by each robot's best previous move and the best previous move of its neighbors. The performance of both approaches is evaluated through simulation tests. PUBLICATION ABSTRACT]},
keywords={Computers--Artificial Intelligence},
isbn={02635747},
language={English}
}

@article{forcangia10,
  title={Consensus-Based Distributed Support Vector Machines.},
  author={Forero, Pedro A and Cano, Alfonso and Giannakis, Georgios B},
  journal={Journal of Machine Learning Research},
  volume={11},
  number={5},
  year={2010}
}

@ARTICLE{jak19,

  author={D. {Jakovetić}},

  journal={IEEE Transactions on Signal and Information Processing over Networks}, 

  title={A Unification and Generalization of Exact Distributed First-Order Methods}, 

  year={2019},

  volume={5},

  number={1},

  pages={31-46},

  doi={10.1109/TSIPN.2018.2846183}}

@ARTICLE{lishiyan19,  author={Z. {Li} and W. {Shi} and M. {Yan}},  journal={IEEE Transactions on Signal Processing},   title={A Decentralized Proximal-Gradient Method With Network Independent Step-Sizes and Separated Convergence Rates},   year={2019},  volume={67},  number={17},  pages={4494-4506},  doi={10.1109/TSP.2019.2926022}}

@article{nedolsshi17,
author = {Nedić, Angelia and Olshevsky, Alex and Shi, Wei},
title = {Achieving Geometric Convergence for Distributed Optimization Over Time-Varying Graphs},
journal = {SIAM Journal on Optimization},
volume = {27},
number = {4},
pages = {2597-2633},
year = {2017},
doi = {10.1137/16M1084316},

URL = { 
        https://doi.org/10.1137/16M1084316
    
},
eprint = { 
        https://doi.org/10.1137/16M1084316
    
}}

@ARTICLE{quli18,  author={G. {Qu} and N. {Li}},  journal={IEEE Transactions on Control of Network Systems},   title={Harnessing Smoothness to Accelerate Distributed Optimization},   year={2018},  volume={5},  number={3},  pages={1245-1260},  doi={10.1109/TCNS.2017.2698261}}

@article{shiqingwuyin15,
author = {Shi, Wei and Ling, Qing and Wu, Gang and Yin, Wotao},
title = {EXTRA: An Exact First-Order Algorithm for Decentralized Consensus Optimization},
journal = {SIAM Journal on Optimization},
volume = {25},
number = {2},
pages = {944-966},
year = {2015},
doi = {10.1137/14096668X},

URL = { 
        https://doi.org/10.1137/14096668X
    
},
eprint = { 
        https://doi.org/10.1137/14096668X
    
}}

@ARTICLE{yuayinzhasay19p1,  author={K. {Yuan} and B. {Ying} and X. {Zhao} and A. H. {Sayed}},  journal={IEEE Transactions on Signal Processing},   title={Exact Diffusion for Distributed Optimization and Learning—Part I: Algorithm Development},   year={2019},  volume={67},  number={3},  pages={708-723},  doi={10.1109/TSP.2018.2875898}}

@ARTICLE{yuayinzhasay19p2,  author={K. {Yuan} and B. {Ying} and X. {Zhao} and A. H. {Sayed}},  journal={IEEE Transactions on Signal Processing},   title={Exact Diffusion for Distributed Optimization and Learning—Part II: Convergence Analysis},   year={2019},  volume={67},  number={3},  pages={724-739},  doi={10.1109/TSP.2018.2875883}}

@ARTICLE{hadvaidom16,  author={Hadjicostis, Christoforos N. and Vaidya, Nitin H. and Domínguez-García, Alejandro D.},  journal={IEEE Transactions on Automatic Control},   title={Robust Distributed Average Consensus via Exchange of Running Sums},   year={2016},  volume={61},  number={6},  pages={1492-1507},  doi={10.1109/TAC.2015.2471695}}

@inproceedings{huseiran17,
	title={A unified analysis of stochastic optimization methods using jump system theory and quadratic constraints},
	author={Hu, Bin and Seiler, Peter and Rantzer, Anders},
	booktitle={Conference on Learning Theory},
	pages={1157--1189},
	year={2017},
	organization={PMLR}
}

@INPROCEEDINGS{ridfrelyn21,
	
	author={Donato Ridgley, Israel L. and Freeman, Randy A. and Lynch, Kevin M.},
	
	booktitle={2021 60th IEEE Conference on Decision and Control (CDC)}, 
	
	title={Self-Healing First-Order Distributed Optimization}, 
	
	year={2021},
	
	volume={},
	
	number={},
	
	pages={3850-3856},
	
	doi={10.1109/CDC45484.2021.9683487}}

@article{fryfarsei17,
	title={IQC-based robustness analysis of discrete-time linear time-varying systems},
	author={Fry, J Micah and Farhood, Mazen and Seiler, Peter},
	journal={International Journal of Robust and Nonlinear Control},
	volume={27},
	number={16},
	pages={3135--3157},
	year={2017},
	publisher={Wiley Online Library}
}

@inproceedings{mayulan21,
	title={Distributed Optimization Design of Multi-Agent Systems With Nonidentical Packet Losses},
	author={Ma, Ji and Yu, Xiao and Lan, Weiyao},
	booktitle={2021 40th Chinese Control Conference (CCC)},
	pages={5604--5609},
	year={2021},
	organization={IEEE}
}

@article{seisen05,
	title={An h/sub/spl infin//approach to networked control},
	author={Seiler, Pete and Sengupta, Raja},
	journal={IEEE Transactions on Automatic control},
	volume={50},
	number={3},
	pages={356--364},
	year={2005},
	publisher={IEEE}
}

@book{cosframar05,
  title={Discrete-time Markov jump linear systems},
  author={Costa, Oswaldo Luiz Valle and Fragoso, Marcelo Dutra and Marques, Ricardo Paulino},
  year={2005},
  publisher={Springer Science \& Business Media}
}

    \vskip 0pt plus -1fil
    \begin{IEEEbiography}[{\includegraphics[width=1in,height=1.25in,clip,keepaspectratio]{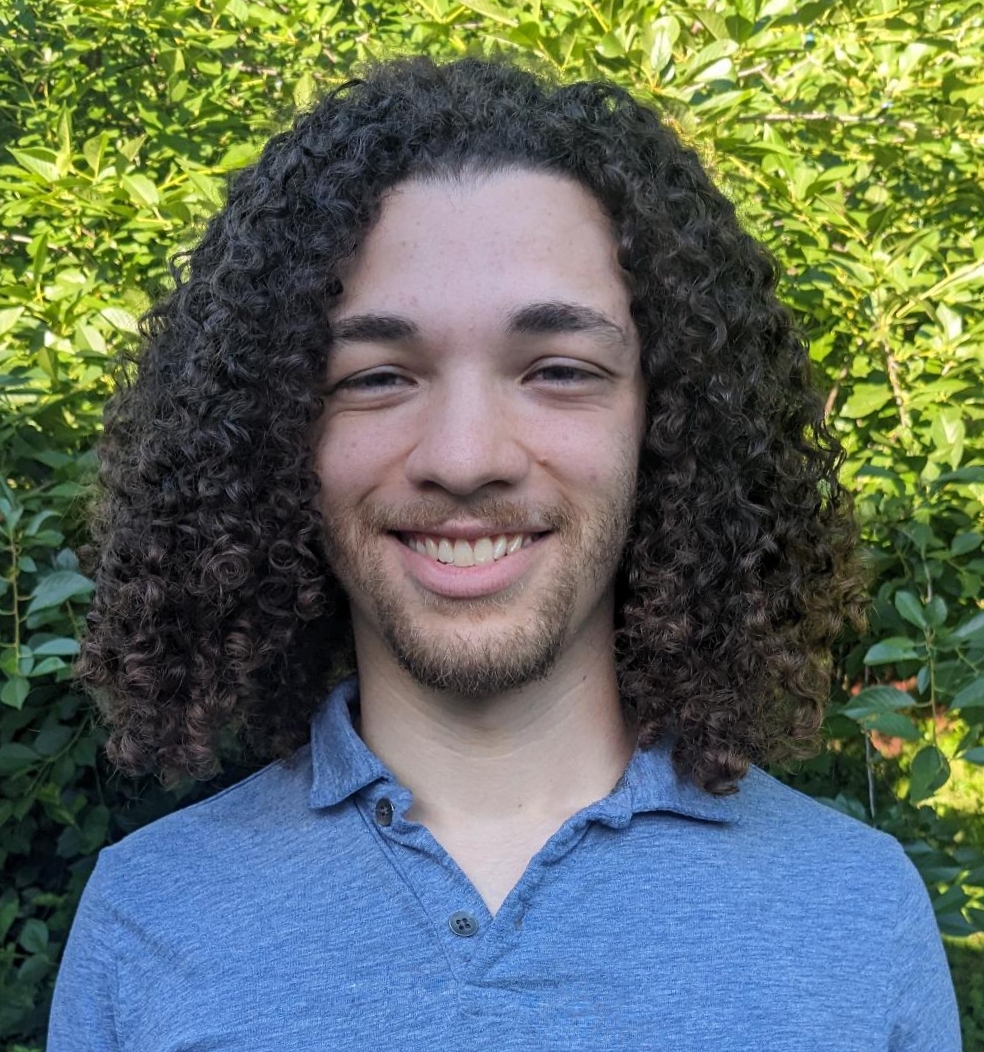}}]{Israel L. Donato Ridgley}%
  Israel Donato Ridgley is a Ph.D. candidate in Electrical Engineering at Northwestern University (Evanston, Illinois) expected to graduate in Summer 2023. He is an affiliate of the Center for Robotics and Biosystems. His research interests include distributed control and optimization, mulit-agent systems, and privacy in multi-agent systems. 
\end{IEEEbiography}

    \begin{IEEEbiography}[{\includegraphics[width=1in,height=1.25in,clip,keepaspectratio]{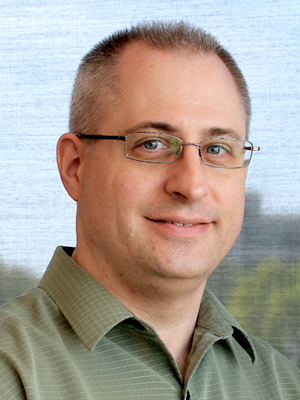}}]{Randy A. Freeman}%
  Randy Freeman received a Ph.D. in Electrical Engineering from the
University of California at Santa Barbara in 1995.  Since then he has
been a faculty member at Northwestern University (Evanston, Illinois),
where he is currently Professor of Electrical and Computer
Engineering.  He is a member of the Northwestern Institute on Complex
Systems and an affiliate of the Center for Robotics and Biosystems.
His research interests include nonlinear systems and control,
distributed control and optimization, multi-agent systems, robust
control, and optimal control.
\end{IEEEbiography}

    \begin{IEEEbiography}[{\includegraphics[width=1in,height=1.25in,clip,keepaspectratio]{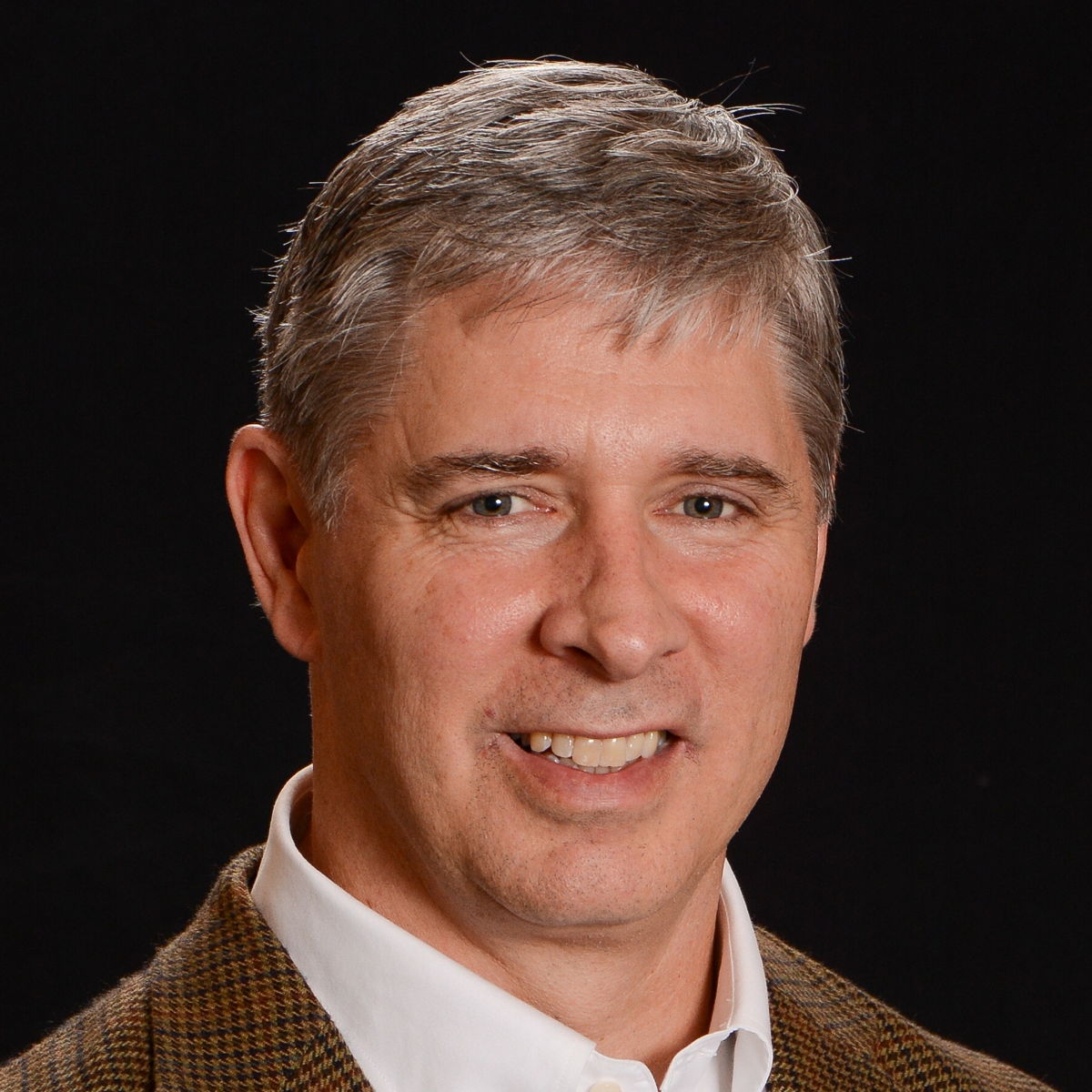}}]{Kevin M. Lynch}%
 (S’90–M’96–SM’05–F’10) received the B.S.E. degree in electrical engineering
from Princeton University, Princeton, NJ, USA, and the Ph.D. degree in robotics from Carnegie Mellon University, Pittsburgh, PA, USA.

He is a professor of mechanical engineering at Northwestern University, where he directs the Center for Robotics and Biosystems and is a member of the Northwestern Institute on Complex Systems. He is a coauthor of the textbooks \emph{Principles of Robot Motion} (Cambridge, MA, USA: MIT Press, 2005) and \emph{Modern Robotics: Mechanics, Planning, and Control} (Cambridge, U.K.: Cambridge Univ. Press, 2017) and the associated online courses and videos. His research interests include robot manipulation and locomotion, self-organizing multiagent systems, and physically collaborative human–robot systems.  
\end{IEEEbiography}
	
\end{document}